\newcommand\hrefdefaultfont{\ttfamily}
\xpatchcmd\href{\setkeys{href}{#1}}{\setkeys{href}{font=\hrefdefaultfont,#1}}{}{\fail}
\renewcommand*{\backref}[1]{}
\renewcommand*{\backrefalt}[4]{
  \ifcase #1 
  [No citations.]
  \or [#2]
  \else [#2]
  \fi }
\let\originalleft\left
\let\originalright\right
\renewcommand{\left}{\mathopen{}\mathclose\bgroup\originalleft}
\renewcommand{\right}{\aftergroup\egroup\originalright}
\newcommand{\calA}{\mathcal{A}}
\newcommand{\calC}{\mathcal{C}}
\newcommand{\calD}{\mathcal{D}}
\newcommand{\calF}{\mathcal{F}}
\newcommand{\calP}{\mathcal{P}}
\newcommand{\calU}{\mathcal{U}}
\newcommand{\calX}{\mathcal{X}}
\newcommand{\RR}{\mathbb{R}}
\newcommand{\ZZ}{\mathbb{Z}}
\newcommand{\st}{\mathbin{\mid}} 
\newcommand{\from}{\colon} 
\newcommand{\homeo}{\mathrel{\cong}} 
\newcommand{\cross}{\times}
\newcommand{\closure}[1]{{\overline{#1}}}
\newcommand{\bdy}{\partial} 
\newcommand{\diam}{\operatorname{diam}} 
\newcommand{\MCG}{\operatorname{MCG}} 
\theoremstyle{plain}
\newtheorem{XXXtheoremQED}[equation]{Theorem} 
  {\pushQED{\qed}\begin{XXXtheoremQED}}
  {\popQED\end{XXXtheoremQED}}
\newcommand{\fakeenv}{} 
\newenvironment{restate}[2]  
{ 
 \renewcommand{\fakeenv}{#2} 
 \theoremstyle{plain} 
 \newtheorem*{\fakeenv}{#1~\ref{#2}} 
 \begin{\fakeenv}
}
{
 \end{\fakeenv}
}
\newenvironment{restated}[2]  
{ 
 \renewcommand{\fakeenv}{#2} 
 \theoremstyle{definition} 
 \newtheorem*{\fakeenv}{#1~\ref{#2}} 
 \begin{\fakeenv}
}
{
 \end{\fakeenv}
}
\newcommand{\Cutoff}{\mathsf{K}}
\newcommand{\Coarse}{\mathsf{C}}
\newcommand{\Diameter}{\mathsf{D}}
\newcommand{\Link}{\mathsf{k}}
\newcommand{\Close}{\mathsf{L}}
\newcommand{\AC}{\mathcal{AC}}
\newcommand{\asydim}{\operatorname{\mathsf{dim}_{\mathsf{asym}}}}
\begin{document}

\title[Asymptotic dimensions]{Asymptotic dimensions of the arc graphs and disk graphs}

\author[Fujiwara]{Koji Fujiwara}
\address{\hskip-\parindent
        Department of Mathematics\\
        Kyoto University\\
        Kyoto 606-8502, Japan}
\email{kfujiwara@math.kyoto-u.ac.jp}

\author[Schleimer]{Saul Schleimer}
\address{\hskip-\parindent
        Mathematics Institute\\
        University of Warwick\\
        Coventry CV4 7AL, United Kingdom}
\email{s.schleimer@warwick.ac.uk}

\thanks{This work is in the public domain.
The first author is supported by  Grants-in-Aid
for Scientific Research from JSPS (20H00114, 15H05739)}

\date{\today}

\begin{abstract}
We give quadratic upper bounds for the asymptotic dimensions of the arc graphs and disk graphs.
\end{abstract}



\maketitle

\section{Introduction}
\label{Sec:Intro}

The asymptotic dimension, denoted $\asydim X$, of a metric space $X$ was introduced by Gromov~\cite[page~29]{g} as a large scale analog of the covering dimension.  
The curve graph, $\calC(S)$, for a surface $S = S_{g,b}$ was introduced by Harvey~\cite[page~246]{ha} as a sort of Bruhat-Tits building for Teichm\"uller space.  
It has since been generalised in many ways.
Bell and Fujiwara first proved that the asymptotic dimension of $\calC(S)$ is finite~\cite[Corollary~1]{bf}.  
More recently, Bestvina and Bromberg proved that $\asydim \calC(S_{g}) \leq 4g - 4$ 
(when $g > 1$) and that $\asydim \calC(S_{g,b}) \leq 4g - 3 + b = \xi'(S_{g,b})$ 
when $g > 0$ and $b > 0$ (or $g = 0$ and $b > 2$)~\cite[Corollary~1.1]{bb}.

Here we combine the machineries of~\cite{bbf} and~\cite{ms} to produce a quasi-isometric embedding of the arc graph $\calA(S, \Delta)$ into a finite product of quasi-trees of curve complexes.  
From this we deduce the following. 

\begin{restate}{Corollary}{Cor:ArcBound}
Suppose that $S = S_{g,b}$ has non-empty boundary.  
Suppose that $\Delta \subset \bdy S$ is a non-empty union of components.
Finally, suppose that $\xi'(S) \geq 1$.  
Then 
\[
\asydim \calA(S, \Delta) \leq \frac{(4g+b) (4g+b-3)}{2} - 2
\]
\end{restate}

\noindent
Sisto~\cite{Sisto22} suggests that the machineries of~\cite[Theorem~5.2]{Sisto17} and~\cite[Theorem~A.2]{Vokes22} can be combined to obtain a similar result.

We also obtain the following result for the disk graph $\calD(M, S)$ of a compression body.

\begin{restate}{Corollary}{Cor:DiskBound}
Suppose $M$ is a non-trivial spotless compression body with upper boundary $S = S_{g, b}$.
Suppose that $\xi'(S) \geq 1$.
Then 
\[
\asydim \calD(M, S) \leq \frac{(4g+b) (4g+b-3)}{2} - 2
\]
\end{restate}


\noindent
Hamenst\"adt~\cite[Theorem~3.6]{hii} has obtained a similar result when $M$ is a handlebody; 
see \refrem{Hamenstadt}.

Obtaining lower bounds better than linear, if even possible, would require new ideas.  
Therefore we end our introduction with the following.

\begin{question}
\label{Que:Tight}
How tight are the upper bounds of Corollaries~\ref{Cor:ArcBound} and~\ref{Cor:DiskBound}?
\end{question}

\section{Background}
\label{Sec:Background}

We now more-or-less follow the conventions of~\cite{bd}. 

Suppose that $(X, d_X)$ is a metric space.
Suppose that $U$ and $V$ are non-empty subsets of $X$ with bounded diameter.
Throughout the paper we use the following notational convention.
\[
d_X(U, V) = \diam_X(U \cup V)
\]

We write $p =_\Coarse q$ if, for non-negative numbers $p$, $q$, and $\Coarse$ we have both $q \leq \Coarse p + \Coarse$ and $p \leq \Coarse q + \Coarse$.  
Also, we use the \emph{cut-off} function: 
$[p]_\Coarse$ is equal to $p$ if $p \geq \Coarse$ and is zero otherwise. 

Suppose that $X$ and $Y$ are metric spaces. 
A relation $f \from X \to Y$ is a \emph{coarse map} if there is a constant $\Coarse$ so that, for all $x \in X$, the image $f(x)$ is non-empty and has $\diam_Y(f(x)) \leq \Coarse$. 
A coarse map $f \from X \to Y$ is a \emph{coarse embedding} if there are functions $F, G \from \RR \to \RR$ so that 
\begin{itemize}
\item
$\lim_{t \to \infty} F(t) = \lim_{t \to \infty} G(t) = \infty$ and, 
\item
for all $x, y \in X$, we have 
\[
F(d_X(x,y)) \leq d_Y(f(x), f(y)) \leq G(d_X(x,y))
\]
\end{itemize}
A coarse map $f \from X \to Y$ is \emph{coarsely onto} if there is a constant $\Coarse > 0$ so that for all $y \in Y$ there is a point $x \in X$ with $d_Y(f(x), y) < \Coarse$.  

A coarse map $f \from X \to Y$ is \emph{coarsely Lipschitz} if there is a constant $\Coarse > 0$ so that for all $x, y \in X$ we have 
\[
d_Y(f(x), f(y)) \leq \Coarse \cdot d_X(x,y) + \Coarse
\]
That is, we have no lower bound, but we require the upper bound to be affine.

A coarse embedding $f$ is a \emph{quasi-isometric embedding} if the functions $F$ and $G$ are both affine (with positive coefficients).
The more usual definition is to require a constant $\Coarse > 0$ so that $d_X(x, y) =_\Coarse d_Y(f(x), f(y))$ for all $x, y \in X$.  
A quasi-isometric embedding $f$ is a \emph{quasi-isometry} if $f$ is also coarsely onto.

\subsection{Asymptotic dimension}

We now follow~\cite[Section~1.E]{g}. 
A metric space $X$ has \emph{asymptotic dimension} $\asydim(X)$ at most $n$ if for every $R > 0$ there is a $\Diameter > 0$ and a cover $\calU$ of $X$ so that 
\begin{itemize}
\item
for all $U \in \calU$ we have $\diam_X(U) \leq \Diameter$ and 
\item
every metric $R$-ball in $X$ intersects at most $n + 1$ sets in $\calU$.
\end{itemize}
For example, trees have asymptotic dimension at most one.
More generally, following~\cite[page~1272]{bd}, a collection of metric spaces $\calX$ has asymptotic dimension at most $n$, \emph{uniformly}, if for every $R$ there is a constant $\Diameter$ so that every $X \in \calX$ has a cover as above.

If two metric spaces $X$ and $Y$ are quasi-isometric and $\asydim(X) \leq n$, then $\asydim(Y) \leq n$.  In view of this, we say a finitely generated group $G$ has asymptotic dimension at most $n$ if some (and thus all) of its Cayley graphs have asymptotic dimension at most $n$.  

We list two well-known facts about asymptotic dimension. 

\begin{fact}\cite[Theorem~32]{bd}
\label{Fac:Product}
Suppose that $U$ and $V$ are metric spaces.
We give $U \times V$ the $\ell^1$ product metric.
Then 
\[
\asydim U \times V \leq \asydim U  + \asydim V \qedhere
\]
\end{fact}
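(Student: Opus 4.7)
The result is cited to~\cite[Theorem~32]{bd}; my plan is to sketch the standard proof. First I would invoke the equivalent characterization of asymptotic dimension via \emph{colored covers}: $\asydim X \leq n$ if and only if, for every $R > 0$, there exist $n+1$ uniformly bounded families $\calU_0, \ldots, \calU_n$ of subsets of $X$, each \emph{$R$-disjoint} (meaning that distinct members of any single $\calU_i$ lie at distance greater than $R$ apart), whose union covers $X$. This characterization is classical and is proved by a coloring argument on the nerve of a cover realizing the original definition.

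Set $m = \asydim U$ and $n = \asydim V$ and fix $R > 0$. I would choose $R$-disjoint family covers $\{\calU_i\}_{i=0}^m$ of $U$ and $\{\calV_j\}_{j=0}^n$ of $V$, each with uniform diameter bound $D$. The first attempt is to form the product families $\calW_{i,j} = \{A \times B \st A \in \calU_i, B \in \calV_j\}$, since each $\calW_{i,j}$ is automatically $R$-disjoint in the $\ell^1$ metric, and their union covers $U \times V$ by sets of $\ell^1$-diameter at most $2D$.

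This naive construction yields $(m+1)(n+1)$ color classes rather than the required $m+n+1$, so the main obstacle is the combinatorial step that merges the $\calW_{i,j}$ into $m+n+1$ families while preserving $R$-disjointness. The Bell--Dranishnikov trick is to run the argument at a sufficiently large scale $R'$, depending on $R$ and the intended final diameter bound, and to use the resulting slack to regroup the product families along a diagonal of indices. Alternatively, one may deduce the fact from a Hurewicz-type fibering theorem applied to the projection $\proj_V \from U \times V \to V$: this projection is $1$-Lipschitz in the $\ell^1$ metric, and the preimage of any bounded set in $V$ is uniformly quasi-isometric to $U$ and so has asymptotic dimension at most $m$. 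Either route then yields the desired bound $\asydim(U \times V) \leq m + n$.
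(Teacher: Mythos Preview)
The paper does not supply its own proof of this statement: it is recorded as a \emph{fact} with a citation to \cite[Theorem~32]{bd} and nothing more.  So there is no in-paper argument to compare against; the relevant question is only whether your sketch is sound.

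Your second route, via the Hurewicz-type fibering theorem applied to the $1$-Lipschitz projection $U \times V \to V$, is correct and is in fact exactly how Bell and Dranishnikov deduce the product inequality in the cited reference.  That part of your proposal is fine.

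Your first route, however, has a real gap.  The naive product families $\calW_{i,j}$ are indeed $R$-disjoint, but regrouping them ``along a diagonal of indices'' (that is, merging all $\calW_{i,j}$ with $i+j$ fixed) destroys $R$-disjointness: if $(i,j) \neq (i',j')$ with $i+j = i'+j'$ then $i \neq i'$ and $j \neq j'$, so neither coordinate provides any separation between a set in $\calW_{i,j}$ and one in $\calW_{i',j'}$.  Simply running the construction at a larger scale $R'$ does not repair this; one needs an additional idea (for instance, choosing the cover of one factor at a scale depending on the \emph{diameter} bound of the other, and then using a saturation or absorption argument).  Since you do not supply that step, the first approach as written is not a proof.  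If you want a self-contained argument, commit to the Hurewicz route.
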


\begin{fact}\cite[Proof of Proposition~22]{bd}
\label{Fac:Subspaces}
If $U$ coarsely embeds into $V$ then 
\[
\asydim U \leq \asydim V \qedhere
\]
\end{fact}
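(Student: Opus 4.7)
The plan is to pull back a good cover of $V$ via the coarse embedding $f \from U \to V$. Let $F$ and $G$ be the lower and upper control functions for $f$, and let $\Coarse$ be an upper bound on the diameters of the sets $f(x) \subset V$; we may replace $F$ and $G$ by their monotone envelopes and assume both are non-decreasing. Given $R > 0$, set $R' = G(R) + \Coarse$. Since $\asydim V \leq n$, there is a cover $\calV$ of $V$ by sets of diameter at most $\Diameter_V$ such that every metric $R'$-ball in $V$ meets at most $n + 1$ elements of $\calV$.

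Define $\calU = \{f^{-1}(W) \st W \in \calV\}$, where $f^{-1}(W) = \{x \in U \st f(x) \cap W \neq \emptyset\}$. Since each $f(x)$ is non-empty, $\calU$ covers $U$. For the diameter bound, suppose $x, y \in f^{-1}(W)$ for some $W \in \calV$. Then $f(x)$ and $f(y)$ each meet $W$, and each has diameter at most $\Coarse$, so $f(x) \cup f(y) \cup W$ has diameter at most $\Diameter_V + 2\Coarse$. Using the paper's convention $d_V(f(x), f(y)) = \diam_V(f(x) \cup f(y))$, the lower control gives $F(d_U(x, y)) \leq d_V(f(x), f(y)) \leq \Diameter_V + 2\Coarse$. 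Since $F(t) \to \infty$, this forces $d_U(x, y) \leq \Diameter_U$ for a constant $\Diameter_U$ depending only on $R$.

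For the multiplicity bound, fix a ball $B = B_U(x, R)$ and a point $v_0 \in f(x)$. If $B$ meets $f^{-1}(W)$, choose $y \in B \cap f^{-1}(W)$ and a point $w \in f(y) \cap W$. Then $v_0$ and $w$ both lie in $f(x) \cup f(y)$, so $d_V(v_0, w) \leq d_V(f(x), f(y)) \leq G(d_U(x, y)) \leq G(R)$. Hence $w \in W \cap B_V(v_0, R')$. By the multiplicity property of $\calV$, at most $n + 1$ sets $W \in \calV$ can meet $B_V(v_0, R')$, so $B$ meets at most $n + 1$ elements of $\calU$.

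The only real obstacle is the bookkeeping required to absorb the constant $\Coarse$ coming from treating $f$ as a relation rather than a point map; once the additive constants are handled consistently, the argument reduces to the observation that a coarse embedding pulls uniformly bounded sets back to uniformly bounded sets and sends $R$-close pairs to $G(R)$-close pairs.
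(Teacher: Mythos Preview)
Your argument is correct and is the standard proof of this fact. Note, however, that the paper does not actually prove \reffac{Subspaces}: it is stated as a fact with a citation to \cite[Proof of Proposition~22]{bd}, and the \texttt{fact} environment simply appends a \qedsymbol. So there is no ``paper's own proof'' to compare against; your write-up supplies precisely the pull-back-of-covers argument that the cited reference contains. The bookkeeping with the coarseness constant $\Coarse$ (arising because $f$ is a relation rather than a map) is handled correctly, and both the diameter and multiplicity bounds go through as you wrote them.
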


\subsection{Quasi-trees of metric graphs}
\label{Sec:BBF}




We quickly review the machinery of \emph{quasi-trees of metric spaces}, as introduced in~\cite[Section~4]{bbf}.
Suppose that $\calF$ is a collection of metric graphs.
Suppose also that we have, for every pair of distinct graphs $A, B \in \calF$, a non-empty subset $\pi_B(A) \subset B$.
Also, fix a sufficiently large constant $\Link > 0$. 

With respect to the data $(\calF, \pi, \Link)$ we require the following axioms. 

\begin{axiom}[Bounded projections]
\label{Ax:BoundedProjections}
For distinct $A, B \in \calF$ we have:
\[
\diam_B(\pi_B(A)) \leq \Link \qedhere
\]
\end{axiom}

For $A, B, C \in \calF$, and if $A \neq B$ and $B \neq C$, we adopt the following shorthand:
\[
d_B(A, C) = d_B(\pi_B(A), \pi_B(C))
\]

\begin{axiom}[Behrstock inequality]
\label{Ax:Behrstock}
For distinct $A, B, C \in \calF$ at most one of the following is greater than $\Link$: 
\[
d_A(B, C), \quad 
d_B(A, C), \quad 
d_C(A, B) \qedhere
\]
\end{axiom}

\begin{axiom}[Large links]
\label{Ax:LargeLink}
For distinct $A, C \in \calF$ the following set is finite: 
\[
\left\{ B \in \calF \st \mbox{$A \neq B$, $B \neq C$, and $d_B(A, C) > \Link$} \right\} \qedhere
\]
\end{axiom}

\noindent 
We call these the \emph{BBF axioms}.
These are called (P0), (P1), and (P2) in~\cite{bbf}.

Suppose that the data $(\calF, \pi, \Link)$ satisfies the BBF axioms.
Then, by~\cite[Theorem~A]{bbf}, for every sufficiently large $\Cutoff \geq \Link$ there is a metric graph $\calC(\calF) = \calC_\Cutoff(\calF)$ called the \emph{quasi-tree of graphs}.  
We denote the metric on $\calC(\calF)$ by $d_\calC$.  
We now list several properties of $\calC(\calF)$. 

\begin{fact}
\label{Fac:Embed}\cite[Theorem~A and Definition~3.6]{bbf}.
By construction, every $A \in \calF$ isometrically embeds into $\calC(\calF)$.
The image is totally geodesic.
For distinct $A, B \in \calF$ their images in $\calC(\calF)$ are disjoint.
Finally, these images cover the vertices of $\calC(\calF)$.
\end{fact}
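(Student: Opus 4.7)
The plan is to exhibit the construction of $\calC_\Cutoff(\calF)$ given in~\cite[Section~4]{bbf} and verify the four assertions directly. The vertex set is the disjoint union $\bigsqcup_{A \in \calF} V(A)$, equipped with two families of edges: the \emph{internal} edges are the edges of each graph $A \in \calF$; the \emph{connecting} edges join $x \in A$ to $y \in B$ (for distinct $A, B \in \calF$) exactly when no witness $C \in \calF \setminus \{A, B\}$ has $d_C(x, y) > \Cutoff$, with length prescribed so as to record the projections $\pi_A(B)$ and $\pi_B(A)$. The disjointness of the $V(A)$ is built in, which immediately yields the third and fourth claims (disjointness of images and that the images cover the vertices).

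For the isometric embedding, the inclusion $A \hookrightarrow \calC_\Cutoff(\calF)$ is $1$-Lipschitz because all internal edges of $A$ are retained. The reverse inequality asks that no path $\gamma$ from $x$ to $x' \in A$ using connecting edges can be shorter than $d_A(x, x')$. The plan is to decompose $\gamma$ into maximal excursions out of $A$; each excursion enters and exits $A$ near a common projection $\pi_A(B)$, so by the triangle inequality the total $A$-distance between successive excursion endpoints, combined with the cost of each excursion itself, is at least $d_A(x, x')$. The totally-geodesic assertion follows from the same decomposition: any geodesic can be replaced by one staying in $A$.

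The hard part is the decomposition estimate. When $\gamma$ crosses many graphs, overlapping projections to different $B$'s could interact, and naive bookkeeping overcounts the cost. This is exactly what the BBF axioms are designed to control: the Behrstock inequality (\refax{Behrstock}) ensures that at most one pairwise projection is simultaneously large, while the large-links axiom (\refax{LargeLink}) guarantees that only finitely many $B$'s contribute substantially between $x$ and $x'$. Taking $\Cutoff$ much larger than $\Link$ allows these local bounds to chain along $\gamma$ without loss. Making this bookkeeping precise is the content of~\cite[Theorem~A]{bbf}, to which I would ultimately defer.
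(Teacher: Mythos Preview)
The paper does not prove this statement; it is stated as a cited \emph{Fact} (with reference to~\cite[Theorem~A and Definition~3.6]{bbf}) and no argument is given beyond the citation. Your proposal therefore goes further than the paper does, though both you and the paper ultimately defer to~\cite{bbf} for the substance.

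That said, your description of the construction is inaccurate. In~\cite{bbf} the connecting edges are not placed between arbitrary vertices $x \in A$ and $y \in B$ subject to a condition on $d_C(x,y)$. Rather, edges of a fixed length are attached between the \emph{projection sets} $\pi_A(B) \subset A$ and $\pi_B(A) \subset B$, and only for those pairs $A, B$ for which no $C \in \calF$ has $d_C(A,B) > \Cutoff$ (compare \reffac{Close}). Since for $x \in A$ and $y \in B$ one has $\pi_C(x) = \pi_C(A)$ and $\pi_C(y) = \pi_C(B)$, your stated condition does not actually depend on the particular vertices chosen and, as written, would join every vertex of $A$ to every vertex of $B$. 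This distinction is exactly what drives the isometric-embedding argument: an excursion out of $A$ and back must exit and re-enter through bounded-diameter sets of the form $\pi_A(B)$, which is what forces the excursion to cost at least the $A$-distance between its endpoints. With the construction corrected your excursion outline has the right shape, and the bookkeeping you allude to is indeed the content of~\cite[Theorem~A]{bbf}.
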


Thus we may identify the vertices of $A$ with their images in $\calC(\calF)$.
We extend the definition of $\pi_B$ as follows.
If $b \in B$ then we take $\pi_B(b) = b$.
If $a \in A$, and $A$ is distinct from $B$, then we take $\pi_B(a) = \pi_B(A)$.
One may think of $\pi_B \from \calC(\calF) \to B$ as being a ``closest points projection'' map.
We adopt the shorthand 
\[
d_B(a, c) = d_B(\pi_B(a), \pi_B(c))
\]

\begin{fact}\cite[Definition~4.1]{bbf}
\label{Fac:Close}
By construction, there is a constant $\Close$ (depending only on $\Cutoff$) with the following property.
Suppose that $A, B \in \calF$ are graphs so that the set of \refax{LargeLink} is empty.
Then $d_\calC(\pi_A(B), \pi_B(A)) \leq \Close$.
\end{fact}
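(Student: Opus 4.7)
The plan is to deduce \reffac{Close} directly from the BBF construction of $\calC_\Cutoff(\calF)$ given in~\cite[Definition~4.1]{bbf}. The key observation is that the hypothesis -- emptiness of the large-link set of $(A, B)$ at the scale $\Link$ -- is precisely the condition that forces a short connection in $\calC(\calF)$ between the two projections; the fact is essentially built into the construction rather than being a nontrivial consequence of it.

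First I would recall the construction. One begins with the disjoint union $\bigsqcup_{A \in \calF} A$ and then, for every pair of distinct graphs $(A, B)$ whose large-link set at the threshold $\Cutoff$ is empty, one glues in a single edge of length one connecting a chosen point of $\pi_A(B)$ to a chosen point of $\pi_B(A)$. Since $\Link \leq \Cutoff$, emptiness at $\Link$ implies emptiness at $\Cutoff$, so such an edge is glued in for the pair $(A, B)$. Combined with \refax{BoundedProjections}, which gives $\diam_A(\pi_A(B)) \leq \Link$ and $\diam_B(\pi_B(A)) \leq \Link$, and with \reffac{Embed}, which guarantees that $A$ and $B$ embed isometrically into $\calC(\calF)$, one obtains $d_\calC(\pi_A(B), \pi_B(A)) \leq 2\Link + 1$. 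Taking $\Close$ to be this constant (which depends only on $\Cutoff$, via $\Link$ and the added edge length) finishes the proof.

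The main obstacle is verifying that the BBF construction really inserts an edge of uniformly bounded length between these two projection sets, and that no alternative (longer) definition of distance through competing paths is forced on us. The axioms \refax{Behrstock} and \refax{LargeLink} are exactly what guarantee that other graphs $C$ cannot interfere in a problematic way, but making this precise is bookkeeping against the definitions in~\cite{bbf} rather than a one-line argument. Fortunately, the claim here is only an upper bound, and the glued edge provides an explicit witness realising it; the heavier work of showing that $\calC(\calF)$ is a quasi-tree on which each $A$ is totally geodesic is already carried out in~\cite[Theorem~A]{bbf}.
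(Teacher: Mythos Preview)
Your proposal is correct and matches the paper's approach: the paper offers no proof beyond the phrase ``by construction'' and the citation to~\cite[Definition~4.1]{bbf}, and your argument is precisely an unpacking of that construction. One minor inaccuracy is that the attached edges in~\cite[Definition~4.1]{bbf} need not have length one (and adjacency in the projection complex is phrased via the modified distance $d^\pi$), but this only shifts the value of $\Close$ and does not affect the logic.
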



We have the following \emph{distance estimate} for $d_\calC$.

\begin{theorem}\cite[Theorem~4.13]{bbf}
\label{Thm:BBFEstimate}
Suppose that $\calF$ is a family of metric graphs satisfying the BBF axioms.
Suppose that $\Cutoff$ is sufficiently large.
Suppose that $\calC_\Cutoff(\calF)$ is the quasi-tree of graphs.
Then every $\Cutoff'$, sufficiently larger than $\Cutoff$, has the follow property. 
For any $a, c \in \calC(\calF)$ we have
\[
\frac{1}{2} \sum[d_B(a,c)]_{\Cutoff'} 
      \leq d_{\calC(\calF)}(a,c) 
      \leq 6\Cutoff + 4 \sum [d_B(a,c)]_\Cutoff
\]
where both sums are taken over all $B \in \calF$. \qed
\end{theorem}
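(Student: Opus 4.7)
The plan is to prove the two inequalities separately, following the standard distance-estimate scheme familiar from mapping class group and curve complex arguments. The upper bound is constructive: I would build an explicit path in $\calC(\calF)$ from $a$ to $c$ whose length is controlled by the right-hand side. The lower bound is obstructive: each summand on the left must be shown to be a genuine lower bound on $d_{\calC(\calF)}(a,c)$, and these contributions must be combined additively without multiple counting.

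For the upper bound, first apply \refax{LargeLink} to see that $\calB = \{ B \in \calF \st d_B(a,c) > \Cutoff \}$ is finite. Using \refax{Behrstock} one can linearly order $\calB$ by the relative position of the pairs $(\pi_B(a), \pi_B(c))$; this is the standard Behrstock ordering of the ``relevant'' graphs. Then concatenate, in that order, geodesic segments inside each $B$ (joining closest representatives of $\pi_B(a)$ and $\pi_B(c)$, of length at most $d_B(a,c)$) with short bridge edges connecting consecutive graphs. The bridges have uniformly bounded length by \reffac{Close} together with the definition of $\calC(\calF)$. Summing yields a path of length at most $4 \sum_B [d_B(a,c)]_\Cutoff + O(|\calB|)$, with the additive error absorbed into the $6\Cutoff$ term via the uniform bound on bridge lengths.

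For the lower bound, the starting point is that for each $B \in \calF$ the projection $\pi_B \from \calC(\calF) \to B$ is coarsely Lipschitz, which follows from the construction of $\calC(\calF)$. A naive sum over $B$ would lose a factor of $|\calB|$, so the main obstacle is a bottleneck argument: for each $B$ with $d_B(a,c) \geq \Cutoff'$, any path $\gamma$ from $a$ to $c$ in $\calC(\calF)$ must enter a controlled neighborhood of $\pi_B(a)$ before leaving for $\pi_B(c)$, and the sub-arc of $\gamma$ between these events has length at least $\tfrac{1}{2} d_B(a,c) - O(1)$. The Behrstock inequality then forces these sub-arcs, for distinct $B \in \calB$, to be pairwise disjoint in $\gamma$: if two graphs $B \neq B'$ attempted to charge the same piece of $\gamma$, then $d_B(B', \cdot)$ and $d_{B'}(B, \cdot)$ would both have to be large, contradicting \refax{Behrstock}. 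Summing the disjoint contributions gives the desired lower bound. Calibrating $\Cutoff'$ relative to $\Cutoff$, $\Link$, and $\Close$ so that all of these estimates are simultaneously valid is the delicate bookkeeping step of the argument.
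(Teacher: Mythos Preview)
The paper does not prove this statement: it is quoted verbatim from \cite[Theorem~4.13]{bbf} and marked with a terminal \qed. There is therefore no in-paper argument to compare your proposal against; the authors simply import the distance estimate as a black box from the Bestvina--Bromberg--Fujiwara machinery.

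Your sketch is a reasonable outline of the strategy one finds in \cite{bbf}. The upper bound via the Behrstock ordering and a concatenated standard path is exactly how it goes there. For the lower bound, however, your disjoint-subarcs argument is softer than what \cite{bbf} actually does. The claim that overlapping subarcs would force both $d_B(B', \cdot)$ and $d_{B'}(B, \cdot)$ to be large is not a direct instance of \refax{Behrstock} as stated (that axiom governs a triple of \emph{graphs}, not a graph and a point on a path), and making it precise requires knowing that $\pi_B$ is contracting in the strong sense that moving along $\gamma$ outside a neighborhood of $B$ barely changes $\pi_B$. In \cite{bbf} the lower bound is obtained instead by showing that the standard path is a uniform-quality quasi-geodesic in $\calC(\calF)$ (their Theorem~4.11 and surrounding lemmas), and then reading off its length. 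Your bottleneck heuristic can be made to work, but it needs the contraction property as an explicit intermediate step rather than a direct appeal to \refax{Behrstock}.
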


Suppose that $G$ is a group acting on $\calF$.
We further assume that, for any $g \in G$ and for any $A \in \calF$ there is an isometry $g_A \from A \to g \cdot A$.
We suppose that these isometries have the following consistency properties.
Suppose that $g, h \in G$ are group elements and $A, B \in \calF$ are graphs with $B = g \cdot A$. 
\begin{itemize}
\item
For all $a \in A$ we have $h_B (g_A (a)) = (hg)_A (a)$. 
\item
For any $C \in \calF$ we have $g_A(\pi_A(C)) = \pi_B(g \cdot C)$. 
\end{itemize}
From \cite[Section~3.7]{bbf} we deduce the following:
there is an isometric action of $G$ on the quasi-tree of graphs $\calC(\calF)$ which extends the action of the isometries $g_A$.

We also have the following control on the asymptotic dimension of $\calC(\calF)$.

\begin{theorem}\cite[Theorem~4.24]{bbf}
\label{Thm:BBFBound}
Suppose that $\calF$ is a family of metric graphs satisfying the BBF axioms. 
Suppose that $\calF$ has asymptotic dimension at most $D$, uniformly.
Then $\asydim(\calC(\calF)) \leq D + 1$. \qed
\end{theorem}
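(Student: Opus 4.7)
The plan is to exploit the quasi-tree structure of $\calC(\calF)$ together with the uniform $D$-dimensional covers of the individual graphs $A \in \calF$, in the spirit of a Hurewicz-type theorem for extensions: the ``$+1$'' in the bound reflects that a quasi-tree has asymptotic dimension one, while the ``$D$'' comes from the fibers.

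First, given $R > 0$, I choose a larger scale $R' \gg R$ that absorbs the constants $\Cutoff$, $\Link$, and $\Close$ from the quasi-tree construction. By the uniform asymptotic dimension hypothesis, there is a $\Diameter_0 = \Diameter_0(R', D)$ and, for each $A \in \calF$, a cover $\calU_A$ of $A$ whose pieces have $A$-diameter at most $\Diameter_0$ and multiplicity at most $D+1$ against $R'$-balls in $A$.

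Second, for each $U \in \calU_A$, I extend $U$ to a saturated piece $\widehat U \subset \calC(\calF)$ consisting of points $x$ whose closest-point projection $\pi_A(x)$ lies in $U$ and whose distance from $U$ in $\calC(\calF)$ is essentially realised by the $A$-term of the distance estimate of \refthm{BBFEstimate}. The distance estimate bounds $\diam_\calC(\widehat U)$ by a quantity depending only on $\Diameter_0$, $\Cutoff$, and $D$, and the union of all $\widehat U$ covers $\calC(\calF)$ by \reffac{Embed}. Third, I bound the multiplicity of an $R$-ball $B_R(x)$ against $\{\widehat U\}$: the distance estimate forces $B_R(x)$ to interact only with those $A$ whose projection of $x$ lies on a short (quasi-)geodesic outward in the quasi-tree; the Behrstock inequality (\refax{Behrstock}) and large links axiom (\refax{LargeLink}) linearly order these $A$'s, allowing at most one ``transition'' between adjacent graphs. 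Combined with the $D+1$ internal multiplicity inside each individual $A$, this yields multiplicity at most $D + 2$, that is, $\asydim \calC(\calF) \leq D + 1$.

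The main obstacle is this third step: making precise the sense in which an $R$-ball in $\calC(\calF)$ has an essentially unique ``dominant'' graph $A$, and controlling the fringe where the ball transitions from one $A$ to the next without double-counting. This is exactly where the Behrstock and large-link axioms do the heavy lifting, mimicking the rigid geodesic structure of a tree and preventing the contribution of the quasi-tree direction from inflating beyond $+1$.
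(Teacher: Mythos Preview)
The paper does not supply a proof of this theorem: the statement is quoted verbatim from \cite[Theorem~4.24]{bbf} and closed with a \texttt{\textbackslash qed}, so there is nothing in the paper to compare your argument against.

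That said, your sketch is in the right spirit --- the result in \cite{bbf} is indeed obtained by a Hurewicz-type argument, using that $\calC(\calF)$ sits over a quasi-tree (the projection complex) with ``fibers'' the individual $A \in \calF$.  However, your third step is too loose as written.  An $R$-ball in $\calC(\calF)$ need not have a single ``dominant'' $A$ with at most one transition: it can meet on the order of $R$ distinct graphs $A$ (each isometrically embedded, strung along a quasi-geodesic), so the naive count of $(D+1)$ pieces per graph times the number of graphs blows up.  The actual argument in \cite{bbf} does not work cover-by-cover at the level of $\calC(\calF)$; rather it uses the Bell--Dranishnikov Hurewicz theorem (or equivalently the finite-union/tree-of-spaces machinery) applied to the Lipschitz map from $\calC(\calF)$ to the projection complex $\calP_\Cutoff(\calF)$, which is a quasi-tree of asymptotic dimension at most one, together with the fact that preimages of bounded sets lie in a uniformly bounded number of $A$'s and hence have asymptotic dimension at most $D$ uniformly.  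Your ``saturated piece'' construction and the linear-ordering observation from \refax{Behrstock} and \refax{LargeLink} are heading toward this, but you would need to invoke the Hurewicz theorem explicitly (or reprove it) rather than attempt a direct multiplicity count.
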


When all of the metric graphs are quasi-trees this can be improved.

\begin{theorem}\cite[Theorem~B(ii)]{bbf}
\label{Thm:BBFBoundTree}
Suppose that $\calF$ is a family of quasi-trees satisfying the BBF axioms.
Suppose futher that the quasi-isometry constants are uniformly bounded.
Then $\asydim(\calC(\calF)) \leq 1$. \qed
\end{theorem}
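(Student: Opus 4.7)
The plan is to show that $\calC(\calF)$ is itself a quasi-tree; once this is in hand, \reffac{Subspaces} combined with the fact that trees have asymptotic dimension at most one immediately yields $\asydim \calC(\calF) \leq 1$.

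To establish that $\calC(\calF)$ is a quasi-tree, I would invoke Manning's bottleneck characterisation: a geodesic metric space $X$ is a quasi-tree if and only if there is a constant $\Delta \geq 0$ such that for every pair of points $x, y \in X$, the midpoint of any geodesic from $x$ to $y$ lies within $\Delta$ of every path joining $x$ to $y$. The key input to verifying this in $\calC(\calF)$ is \refthm{BBFEstimate}: for $a, c \in \calC(\calF)$, the distance $d_{\calC(\calF)}(a,c)$ is coarsely the sum $\sum_B [d_B(a,c)]_{\Cutoff'}$ of projection distances. Thus, to be efficient, a geodesic from $a$ to $c$ must traverse, within each relevant $B$, a subpath close to a $B$--geodesic from $\pi_B(a)$ to $\pi_B(c)$. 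Since each $B \in \calF$ is by hypothesis a quasi-tree with \emph{uniform} constants, Manning's criterion applied inside $B$ supplies a single per-graph bottleneck constant $\Delta_0$ valid for all $B \in \calF$ simultaneously.

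The next step is to upgrade this local information to a global bottleneck constant for $\calC(\calF)$. Here I would use the Behrstock inequality \refax{Behrstock} to linearly order the graphs $B \in \calF$ with $d_B(a,c) > \Cutoff$; this ordering is the engine driving the BBF construction. It forces any path from $a$ to $c$, not just geodesics, to enter each relevant $B$ near $\pi_B(a)$ and exit near $\pi_B(c)$, up to the constant $\Close$ of \reffac{Close}. Together with the per-graph bottleneck $\Delta_0$ and the isometric, totally geodesic embedding of each $B$ into $\calC(\calF)$ from \reffac{Embed}, this assembles into a uniform bottleneck constant $\Delta$ for $\calC(\calF)$.

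The hardest step will be the last: controlling paths that ``stray'' through graphs $B'$ with $d_{B'}(a,c) \leq \Cutoff$. Such detours could, in principle, allow an $(a,c)$-path to avoid a projection $\pi_B(a)$ of some relevant $B$, breaking the proposed bottleneck. Ruling this out requires careful use of the Behrstock inequality together with the large-links axiom \refax{LargeLink} to show that any such detour either costs strictly more than it saves in length, or still passes within a uniformly bounded neighbourhood of the forced passage points. Managing the interaction of the constants $\Link$, $\Close$, $\Cutoff$, and $\Cutoff'$ through these repeated appeals, and arranging that the uniform quasi-tree constants for $\calF$ really do produce a single $\Delta_0$ (rather than one depending on the pair $a, c$), is the main technical bookkeeping burden.
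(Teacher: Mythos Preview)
The paper does not prove this statement; it is quoted verbatim from \cite[Theorem~B(ii)]{bbf} and closed with a \qed. There is therefore no ``paper's own proof'' to compare against.

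That said, your outline is the correct strategy and matches what \cite{bbf} actually does: one shows that $\calC(\calF)$ is itself a quasi-tree via Manning's bottleneck criterion, and then the dimension bound follows. Your identification of the ingredients (the distance estimate, the Behrstock ordering of the ``large'' intermediate graphs, the uniform per-graph bottleneck constant $\Delta_0$) is accurate. One small correction: the mechanism that forces \emph{every} path from $a$ to $c$ to pass near $\pi_B(a)$ and $\pi_B(c)$ for each large $B$ is not really \reffac{Close}, which concerns only adjacent graphs in the ordering; rather, it is the coarse-closest-points property of the projections $\pi_B$ established in \cite{bbf} (their Corollary~4.10). This is what handles your ``hardest step'' about detours through small graphs $B'$, and it does so cleanly without the case analysis you anticipate. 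With that substitution your sketch would go through.
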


The notion of a \emph{quasi-tree of metric spaces} was introduced and used to prove \cite[Theorem~D]{bbf}: 
mapping class groups (of connected, compact, oriented surfaces) have finite asymptotic dimension.  

\subsection{Surfaces, curves, and arcs}

Let $S = S_{g,b}$ denote the connected, compact, oriented surface of genus $g$ with $b$ boundary components.  
The \emph{complexity} of $S$ is defined to be $\xi(S) = 3g - 3 + b$.  
This counts the number of curves in any pants decomposition of $S$.  
We will always assume that $\xi(S) \geq 1$.  
We will also need the \emph{modified complexity} $\xi'(S) = 4g - 3 + b$.  
If $S$ is closed then we will simply write $S_g$ for $S_{g,0}$. 

Suppose that $\alpha$ is an embedded arc or curve in $S$.  
We call the embedding \emph{proper} if $\alpha \cap \bdy S = \bdy \alpha$.
A properly embedded arc or curve $\alpha$ in $S$ is \emph{essential} if it does not cut a disk off of $S$.  
A properly embedded curve $\alpha$ is \emph{non-peripheral} if it does not cut an annulus off of $S$.

A \emph{proper isotopy} is an isotopy through proper embeddings.
Let $[\alpha]$ denote the proper isotopy class of $\alpha$.
Given $\alpha$ and $\beta$, properly embedded arcs or curves, we define their \emph{geometric intersection number}:
\[
i(\alpha, \beta) = \min \{ |\alpha' \cap \beta'| : \alpha' \in [\alpha], \beta' \in [\beta] \}
\]
Note that $i(\alpha, \beta) = 0$ if and only if they have disjoint (proper isotopy) representatives. 
To lighten the notation, we typically will not distinguish between a curve (or arc) $\alpha$ and its proper isotopy class $[\alpha]$. 

A connected, compact subsurface $X \subset S$ is \emph{essential} if every component of $\bdy X$ is either a component of $\bdy S$ or is essential and non-peripheral in $S$.  
If $X$ is essential we define the \emph{relative boundary} of $X$ to be $\bdy_S X = \bdy X - \bdy S$. 

\begin{remark}
\label{Rem:Nested}
Note that if $X \subset Y$ are both essential subsurfaces of $S$, then $\xi'(X) \leq \xi'(Y)$.  
Equality holds if and only if $X$ and $Y$ are isotopic. 
\end{remark}

We say that a properly embedded curve or an arc $\alpha$ \emph{cuts} $X$ if every $\alpha' \in [\alpha]$ intersects $X$. 
If $\alpha$ does not cut $X$ then we say that $\alpha$ \emph{misses} $X$.
Suppose that $X$ and $Y$ are essential, and non-isotopic, subsurfaces of $S$.  
We say that $X$ is \emph{nested} in $Y$ if it is (perhaps after an isotopy) contained in $Y$.  
We say that $X$ and $Y$ \emph{overlap} if $\bdy_S X$ cuts $Y$ and $\bdy_S Y$ cuts $X$. 

\subsection{Curve and arc graphs}

We now define the \emph{curve graph} $\calC(S)$.  
Let $\calC^{(0)}(S)$ be the set of proper isotopy classes of essential, non-peripheral curves in $S$.  
We have an edge $(\alpha, \beta) \in \calC^{(1)}$ exactly when $\alpha$ and $\beta$ are distinct and $i(\alpha, \beta) = 0$.  

We define the \emph{arc graph} $\calA(S)$ similarly:  
$\calA^{(0)}(S)$ is the set of proper isotopy classes of essential arcs in $S$.  
Again we have an edge $(\alpha, \beta) \in \calA^{(1)}$ exactly when $\alpha$ and $\beta$ are distinct and $i(\alpha, \beta) = 0$.  
Note that $\calA(S)$ is empty when $S$ is closed.

Masur and Schleimer generalise the definition of the arc graph slightly, as follows~\cite[Definition~7.1]{ms}.  
Suppose that $\Delta \subset \bdy S$ is a non-empty collection of boundary components.
We define $\calA(S, \Delta)$ to be the sub-graph of $\calA(S)$ spanned by the arcs having both endpoints in $\Delta$.
Note that $\calA(S, \bdy S) = \calA(S)$.

We next define the \emph{arc and curve graph} $\AC(S)$:
the zero-skeleton is exactly $\calA^{(0)}(S) \cup \calC^{(0)}(S)$.
Edges come from having disjoint representatives, as before. 
Note that the inclusion of $\calC^{(0)}(S)$ into $\AC^{(0)}(S)$ induces a quasi-isometry of graphs.

The definition of the curve complex must be modified when $\xi(S) \leq 1$: for $S_{1,1}$ we use $i(\alpha, \beta) = 1$ and for $S_{0,4}$ we use $i(\alpha, \beta) = 2$.
For both of these surfaces the graph of curves is a copy of the \emph{Farey graph}.
When $S$ is an annulus we define $\calC^{(0)}(S)$ to be the set of proper isotopy classes of essential properly emebedded arcs where now isotopies are required to fix boundary points. 
Two classes span an edge if they have representatives which are disjoint on their interiors.

All of the various curve, arc, and arc and curve graphs are connected when they are non-empty~\cite[Lemma~2.1]{mm}.
We make each of these into a metric graph by decreeing that all edges have length one.
It is then a theorem of Masur and Minsky that, for any surface $S$ with $\xi(S) \geq 1$, the curve complex $\calC(S)$ is Gromov hyperbolic~\cite[Theorem~1.1]{mm}.  
Masur and Schleimer proved that the same holds for $\calA(S, \Delta)$~\cite[Theorem~20.3]{ms}.

\subsection{\texorpdfstring{$I$}{I}--bundles}

Suppose that $F$ is a connected compact surface, possibly non-orientable, with non-empty boundary.  
Let $\rho \from T \to F$ be an $I$--bundle.   
We call $F$ the \emph{base surface} of the bundle.  
We define $\bdy_v T = \rho^{-1}(\bdy F)$ to be the \emph{vertical boundary} of $T$.  
We define the closure 
\[
\bdy_h T = \closure{\bdy T - \bdy_v T}
\]
to be the \emph{horizontal boundary} of $T$.  
Also, we define the curves 
\[
\bdy (\bdy_h T) = \bdy (\bdy_v T)
\]
to be the \emph{corners} of $T$.  
Finally, there is an involution $\tau \from \bdy_h T \to \bdy_h T$ associated to $\rho$ obtained by swapping the ends of interval fibres.

We now define $\rho_F \from T_F \to F$ to be the \emph{orientation} $I$--bundle over $F$.
Here the preimage under $\rho_F$ of a simple closed curve $\alpha$ is an annulus or a M\"obius band as $\alpha$ is or is not, respectively, orientation preserving in $F$.  
When $F$ is non-orientable we call $T_F$ \emph{twisted} and so $\bdy_h T_F / \tau \homeo F$ is non-orientable.  
If $T_F$ is not twisted then $T_F \homeo F \cross [-1, 1]$ is a product.  
In this case $\tau | (F \cross \{-1\})$ is a homeomorphism from $F \cross \{-1\}$ to $F \cross \{1\}$.

\subsection{Compression bodies}
\label{Sec:CompressionBodies}

References on compression bodies, of the type we are interested in here, include~\cite[Appendix~B]{Bonahon83} and~\cite[Section~1]{Oertel02}.

Suppose that $S = S_{g,b}$ is a surface.  
We assume that $S$ is neither a disk nor a sphere.  
We form $T = S \cross I$.  
We take $\bdy^+ T = S \cross \{1\}$ and $\bdy^- T = S \cross \{0\}$ to be the \emph{upper} and \emph{lower} boundaries of $T$.  
As before, $\bdy_v T = \bdy S \cross I$ is the \emph{vertical boundary}.  
We now attach a collection of three-dimensional two- and three-handles to the lower boundary of $T$ to obtain a three-manifold $M$.  
We define $\bdy^+\! M = \bdy^+ T$ as well as $\bdy_v M = \bdy_v T$.  
Finally, we define 
\[
\bdy^-\! M = \closure{\bdy M - (\bdy^+\! M \cup \bdy_v M)}
\]
Thus $M$ is a \emph{compression body}.  
If $\bdy^-\! M$ is not homeomorphic to $\bdy^+\! M$, then $M$ is \emph{non-trivial}.  
If $\bdy^-\! M$ has no sphere or disk components, then $M$ is \emph{spotless}.  
To simplify the notation we take $S = \bdy^+\! M$.  
Note that if $\bdy^-\! M$ is empty then $M$ is necessarily a handlebody of positive genus. 

We now state the classification of compression bodies.

\begin{theorem}
\label{Thm:ClassificationOfCompBodies}
Suppose that $M$ and $N$ are compression bodies.  
Then $(M, \bdy_v M)$ is homeomorphic to $(N, \bdy_v N)$ if and only if $(\bdy^+ M, \bdy^-\! M)$ is homeomorphic to $(\bdy^+ N, \bdy^-\! N)$. \qed
\end{theorem}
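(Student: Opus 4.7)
The forward direction is essentially a restriction argument. A homeomorphism of pairs $\varphi \from (M, \bdy_v M) \to (N, \bdy_v N)$ restricts to $\bdy M \to \bdy N$, sends $\bdy_v M$ to $\bdy_v N$, and preserves compressibility of boundary components in the ambient three-manifold. In the non-trivial case, $\bdy^-\! M$ is precisely the union of incompressible components of $\bdy M \setminus \bdy_v M$ (by the handle description), so $\varphi$ must send $\bdy^+\! M$ to $\bdy^+\! N$ and $\bdy^-\! M$ to $\bdy^-\! N$.  The trivial case, in which $M$ is a product $I$--bundle, is direct.

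For the converse, the plan is to match complete meridian disk systems.  From the handle description of \refsec{CompressionBodies}, I fix in $M$ a finite family $\calD$ of pairwise disjoint, properly embedded essential compressing disks (the co-cores of the attached two-handles), with $\bdy \calD \subset \bdy^+\! M$, such that $M \setminus \neigh(\calD) \homeo (\bdy^-\! M) \cross I \sqcup B_M$, where $B_M$ is a (possibly empty) disjoint union of three-balls coming from the three-handles.  I choose $\calD' \subset N$ analogously.  Given a pair homeomorphism $f \from (\bdy^+\! M, \bdy^-\! M) \to (\bdy^+\! N, \bdy^-\! N)$, decomposed as $f = f^+ \sqcup f^-$, the aim is to modify $f^+$ so that $f^+(\bdy \calD) = \bdy \calD'$ and then extend through the interior.

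The matching step is the principal obstacle.  Both $f^+(\bdy \calD)$ and $\bdy \calD'$ are \emph{complete cut systems} on $\bdy^+\! N$:  the complement of each is homeomorphic to $\bdy^-\! N$ with finitely many open disks removed, and the two cut boundaries of each curve correspond to the same complementary piece.  A Laudenbach-type classification of cut systems on surfaces then provides a self-homeomorphism $g$ of $\bdy^+\! N$ with $g(f^+(\bdy \calD)) = \bdy \calD'$.  Alternatively, one may push $f^+(\bdy \calD)$ to the boundary of a disk system in $N$ and perform innermost-disk surgeries against $\calD'$ to iteratively reduce their intersection while maintaining completeness.  After replacing $f^+$ by $g \circ f^+$, the matching is accomplished.

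With the boundary curves matched, I extend $f^+$ across each $D \in \calD$ by the Alexander trick on disks, sending it to the corresponding disk of $\calD'$.  The complementary pieces $M \setminus \neigh(\calD)$ and $N \setminus \neigh(\calD')$ are disjoint unions of products $(\bdy^-) \cross I$ and three-balls.  The boundary data built so far, together with $f^-$, defines a self-homeomorphism of the boundary of each piece; the small compatibility check on the vertical annuli of $\bdy_v$ is routine, as any two end-circle homeomorphisms of an annulus extend across it.  This boundary data extends across each three-ball by the Alexander trick and across each product piece by Waldhausen's theorem (a self-homeomorphism of the boundary of $F \cross I$, restricted appropriately, extends into the interior).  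Reassembling the pieces yields the required homeomorphism of pairs $(M, \bdy_v M) \to (N, \bdy_v N)$.
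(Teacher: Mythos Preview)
The paper does not prove this theorem; the statement carries a terminal \qed and is followed only by the remark that the proof is similar to the classification of surfaces~\cite[Theorem~1.1]{FarbMargalit12} and of handlebodies~\cite[Theorem~2.2]{Hempel76}, with~\cite[Corollary~2.3]{BiringerVlamis17} cited for the case $\bdy_v M = \emptyset$.  Your sketch is exactly the standard argument those references encode: match complete meridian systems on the upper boundary via the change-of-coordinates principle, extend over the disks by Alexander, and then over the complementary products and balls.

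One point deserves tightening.  In the last step you feed the \emph{given} $f^-$ into the product pieces and appeal to ``Waldhausen's theorem'' to extend.  But an arbitrary homeomorphism of $\bdy(F \times I)$ need not extend to $F \times I$: you would need the map on $F \times \{0\}$ (your $f^-$) to be isotopic, through the product, to the map already determined on $F \times \{1\}$ by the modified $f^+$ together with the disk scars, and nothing in the hypotheses forces this.  The clean fix is to discard the given $f^-$ and instead \emph{define} the map on $\bdy^-\! M$ by pushing the upper map down through the product structure; this is automatically a homeomorphism $\bdy^-\! M \to \bdy^-\! N$ (the cut complements on the two sides match by construction), and the extension over each $F \times I$ is then simply the product homeomorphism.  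With that adjustment your argument is correct and is what the paper's references amount to.
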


The proof is similar to that of the classification of surfaces~\cite[Theorem~1.1]{FarbMargalit12} and of handlebodies~\cite[Theorem~2.2]{Hempel76}. 
The case of $\bdy_v M = \emptyset$ is discussed by Biringer and Vlamis~\cite[Corollary~2.3]{BiringerVlamis17}.

\subsection{Disk graphs}

Suppose that $(M, S)$ is a non-trivial, spotless compression body.  
Suppose that $(D, \bdy D) \subset (M, S)$ is a properly embedded disk.  
We call $D$ \emph{essential} if $\bdy D$ is essential in $S$.  
We now define the \emph{disk graph} $\calD(M, S)$.  
The vertices of $\calD(M, S)$ are proper isotopy classes of essential disks in $(M, S)$.  
A pair of distinct vertices $D$ and $E$ give an edge $(D, E) \in  \calD(M, S)$ if they have disjoint representatives.  

\subsection{Subsurface projection}

We give one of the standard definitions of subsurface projection~\cite[Definition~4.4]{ms}. 

\begin{definition}
\label{Def:SubSurfaceProjection}
Suppose that $X$ is an essential subsurface, but not a pair of pants, in $S$.  
The relation of \emph{subsurface projection}, $\pi_X \from \AC(S) \to \calC(X)$, is defined as follows.  
Let $\rho_X \from S^X \to S$ be the covering map corresponding to the subgroup $\pi_1(X) < \pi_1(S)$.  
Note that the Gromov compactification of $S^X$ is homeomorphic to $X$.  
This gives an identification of the graphs $\calC(S^X)$ and $\calC(X)$.  
For any $\alpha \in \AC(S)$ we define $\alpha^X = \rho_X^{-1}(\alpha)$ to be the full preimage.  
We define $\alpha|X$ to be the essential arcs, and essential non-peripheral curves, of $\alpha^X$.  
If $X$ is an annulus, then we set $\pi_X(\alpha) = \alpha|X$.  
Otherwise, for every $\beta \in \alpha|X$ we form $N = N(\beta \cup \bdy X)$ and we place the essential isotopy classes of $\bdy_X N$ into $\pi_X(\alpha)$.  
\end{definition}

Note that if $\alpha$ misses $X$ then $\pi_X(\alpha)$ is empty.
Suppose instead that $\alpha$ cuts $X$.
If $X$ is an annulus, then the diameter of $\pi_X(\alpha)$ is at most one.
If $X$ is not an annulus, then the diameter of $\pi_X(\alpha)$ is at most two~\cite[Lemma~2.3]{mm2}.
If $X$ and $Y$ are essential subsurfaces of $S$ then we define $\pi_Y(X) = \pi_Y(\bdy_S X)$.  
If $X$ and $Y$ are disjoint, or if $Y$ is nested in $X$, then this is empty.
We record the following for later use. 

\begin{lemma}
\label{Lem:Overlap}
Suppose that $X$ and $Y$ are overlapping essential subsurfaces of $S$.
Then $\pi_Y(X)$ is non-empty and has diameter at most two. \qed
\end{lemma}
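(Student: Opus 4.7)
The plan is to unpack the definitions and then reduce to a standard fact about subsurface projections of pairwise disjoint multicurves.

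First I would show non-emptiness. By definition, $\pi_Y(X) = \pi_Y(\bdy_S X)$, which is obtained by first intersecting $\bdy_S X$ with $Y$ (keeping only essential arcs and essential non-peripheral curves) and then taking the boundary curves of a regular neighborhood. Because $X$ and $Y$ overlap, by hypothesis $\bdy_S X$ cuts $Y$, meaning at least one component of $\bdy_S X$ has no representative disjoint from $Y$. Thus there is a component $\alpha$ of $\bdy_S X$ with $\alpha \mid Y$ non-empty, so $\pi_Y(\alpha)$ is non-empty (after discarding any inessential boundary curves of the regular neighborhood with $\bdy Y$, at least one essential curve survives since $\alpha$ is essential in $Y$). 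Hence $\pi_Y(X)$ is non-empty.

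Next I would bound the diameter. The relative boundary $\bdy_S X$ is a pairwise disjoint union of essential curves in $S$. Pick any two components $\alpha_1, \alpha_2$ of $\bdy_S X$ that cut $Y$, and any elements $\delta_i \in \pi_Y(\alpha_i)$. Since $i(\alpha_1, \alpha_2) = 0$, the preimages $\alpha_1 \mid Y$ and $\alpha_2 \mid Y$ can be realized disjointly in $Y$. Therefore the regular neighborhoods $N_i = N(\alpha_i \cup \bdy Y)$ can be taken to sit inside a common regular neighborhood $N = N(\alpha_1 \cup \alpha_2 \cup \bdy Y)$; the essential components of $\bdy_Y N_i$ are either components of $\bdy_Y N$ or are disjoint from them. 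A short case analysis (or direct citation of the standard subsurface projection lemma, e.g.\ \cite[Lemma~2.3]{mm2}) then gives $d_Y(\delta_1, \delta_2) \leq 2$. Combining this with the fact that each individual $\pi_Y(\alpha_i)$ already has diameter at most two finishes the bound on $\diam_Y(\pi_Y(X))$.

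The only mildly delicate point is the annular case, where $Y$ is an annulus and the projection lives in the Farey-like graph of arcs in the annular cover; here disjointness of $\alpha_1, \alpha_2$ lifts to disjointness of their arc preimages in the annular cover, so the projections are at distance at most $1$. Aside from that, the argument is entirely routine, and the main obstacle is really just being careful with the annulus versus non-annulus bookkeeping and checking that ``cutting'' behaves correctly for each component of the multicurve $\bdy_S X$.
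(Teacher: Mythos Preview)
Your proposal is correct and is exactly the standard argument; note that the paper itself gives no proof here (the lemma carries a bare \qed), treating it as an immediate consequence of \cite[Lemma~2.3]{mm2}, which is cited in the sentence just before. Your write-up simply unwinds that citation: $\bdy_S X$ is a disjoint multicurve cutting $Y$, so its projection is non-empty of diameter at most two. One small redundancy: once you have $d_Y(\delta_1,\delta_2)\leq 2$ for arbitrary $\delta_i$ in the projection, the ``combining'' step is unnecessary.
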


We will adopt the following useful shorthand notation.  
Suppose that $\alpha$ and $\beta$ are curves or arcs, both cutting an essential subsurface $X \subset S$.  
Then 
\[
d_X(\alpha, \beta) = d_{\calC(X)}(\pi_X(\alpha), \pi_X(\beta))
\]
is the \emph{subsurface projection distance} between $\alpha$ and $\beta$ in $X$.

\section{Bound for the arc graph}

Let $S = S_{g,b}$, where $\xi'(S) > 0$ and $b > 0$.  
Take $\Delta \subset \bdy S$ to be a non-empty union of components.   
Let $\calA(S, \Delta)$ be the graph of essential arcs with endpoints in $\Delta$. 

\subsection{Witnesses for the arc graph}

\begin{definition}
An essential subsurface $X \subset S$ is a \emph{witness} for $\calA(S, \Delta)$ if every arc $\alpha \in \calA(S, \Delta)$ cuts $X$. 
\end{definition}

We repackage a few results~\cite[Lemmas~5.9 and~7.2]{ms}.

\begin{lemma}
\label{Lem:ArcWitness}
Suppose that $X \subset S$ is an essential subsurface, but not an annulus or a pair of pants.  
The following are equivalent.
\begin{itemize}
\item
$X$ is a witness for $\calA(S, \Delta)$. 
\item
$X$ contains $\Delta$. 
\item
For all arcs $\alpha \in \calA(S, \Delta)$, the projection $\pi_X(\alpha)$ is non-empty. 
\item
The projection $\pi_X \from \calA(S, \Delta) \to \calC(X)$ is coarsely Lipschitz with a constant of $2$. \qed
\end{itemize}
\end{lemma}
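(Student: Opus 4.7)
The plan is to prove the four conditions equivalent by a short cycle of implications, drawing heavily on the definition of $\pi_X$ and on standard projection estimates.

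First, I would note that (1) and (3) are tautologically equivalent: by \refdef{SubSurfaceProjection}, $\pi_X(\alpha)$ is non-empty precisely when $\alpha$ cuts $X$ (equivalently, when the full preimage $\alpha^X$ contains an essential arc or non-peripheral curve in the cover). So saying that every $\alpha \in \calA(S,\Delta)$ cuts $X$ is the same as saying every such $\alpha$ projects non-trivially.

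Next I would establish the equivalence of (1) and (2). For (2) $\Rightarrow$ (1): if $\Delta$ is contained in $X$, then every boundary component of $\Delta$ sits in $\bdy S \cap \bdy X$. Any essential arc $\alpha \in \calA(S,\Delta)$ has both endpoints on $\Delta$; since $\alpha$ is essential it cannot be properly isotoped into $\bdy X$, and so must enter the interior of $X$, giving $\pi_X(\alpha) \neq \emptyset$. For (1) $\Rightarrow$ (2), I would argue contrapositively. Suppose some component $\delta \subset \Delta$ is not contained in $X$. Since $X$ is essential (and not an annulus or pants), its complement contains a component $Y$ with $\delta \subset \bdy Y$ and with $\xi(Y) \geq 1$ or at least enough room to contain an essential arc based on $\delta$. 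I would build an arc $\alpha$ properly embedded in $Y$ with both endpoints on $\delta \subset \Delta$, and verify it is essential in $S$; this uses the hypothesis $\xi'(S) > 0$ and the fact that $X$ is not isotopic to $S$. Such an $\alpha$ lies in $\calA(S,\Delta)$ and misses $X$, violating (1). This is the step that genuinely uses the topology and is where I would quote \cite[Lemmas~5.9 and~7.2]{ms} if the case analysis becomes long.

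For the equivalence of (4) with the others, note first that (4) trivially implies (3) because a coarsely Lipschitz relation must, by definition, send each point to a non-empty set. For (1) $\Rightarrow$ (4), fix adjacent arcs $\alpha, \beta \in \calA(S,\Delta)$, so $i(\alpha,\beta) = 0$. By (1), both $\pi_X(\alpha)$ and $\pi_X(\beta)$ are non-empty. Choose disjoint representatives of $\alpha$ and $\beta$; their full preimages in the cover $S^X$ are disjoint, and so $(\alpha \cup \beta)|X$ consists of pairwise disjoint essential arcs and curves. The standard projection-diameter bound \cite[Lemma~2.3]{mm2} then gives $\diam_X(\pi_X(\alpha) \cup \pi_X(\beta)) \leq 2$. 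Concatenating along an arbitrary edge-path in $\calA(S,\Delta)$ yields $d_X(\alpha,\beta) \leq 2\, d_{\calA(S,\Delta)}(\alpha,\beta)$, the coarse Lipschitz bound with constant~$2$.

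The main obstacle in this proof is the construction in (1) $\Rightarrow$ (2): one needs to produce an explicit essential arc in $\Delta$ missing $X$ whenever $\Delta \not\subset X$, and handle the edge cases where the complementary region is small (for example an annulus or pair of pants based on $\delta$). This is exactly the content repackaged from \cite[Lemmas~5.9 and~7.2]{ms}, and I would simply invoke those results after reducing to the relevant setup. Every other step is either definitional or a direct appeal to the standard bound on projection diameters of disjoint multicurves.
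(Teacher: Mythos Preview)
The paper does not give its own proof of this lemma: it is introduced as a repackaging of \cite[Lemmas~5.9 and~7.2]{ms} and stated with a terminal \qed. Your sketch is correct and strictly more detailed than what the paper offers; you isolate the only substantive implication (that a witness must contain $\Delta$) and defer it to exactly the same source, while spelling out the definitional equivalence of (1) and (3) and the standard \cite[Lemma~2.3]{mm2} argument for the Lipschitz bound that the paper leaves implicit.
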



We have a useful corollary.

\begin{corollary}
\label{Cor:ArcWitnessesOverlap}
Suppose that $X$ and $Y$ are distinct witnesses with $\xi'(X) = \xi'(Y)$.  
Then $X$ and $Y$ overlap.
\end{corollary}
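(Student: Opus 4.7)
The plan is to proceed by contradiction. Up to isotopy, any two distinct essential subsurfaces $X, Y \subset S$ satisfy exactly one of three configurations: they overlap, one is nested in the other, or they admit disjoint representatives. I will show that the latter two configurations contradict the hypotheses of the corollary.

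First, if $X$ is nested in $Y$, or vice versa, then the equality $\xi'(X) = \xi'(Y)$ combined with \refrem{Nested} forces $X$ and $Y$ to be isotopic, contradicting distinctness.

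Next, suppose that $X$ and $Y$ admit disjoint representatives. Then after isotopy $\bdy_S X$ is disjoint from $Y$, so $Y$ lies in the closure of a single component of $S \setminus \bdy_S X$. Either $Y \subset X$, which is the nested case already handled, or $Y \subset \overline{S \setminus X}$. In the latter subcase, \reflem{ArcWitness} tells us that both $X$ and $Y$ contain $\Delta$, hence
\[
\Delta \subset X \cap Y \subset X \cap \overline{S \setminus X} = \bdy_S X.
\]
But by definition $\bdy_S X = \bdy X \setminus \bdy S$ is disjoint from $\bdy S$, while the non-empty set $\Delta$ is contained in $\bdy S$. This contradicts the non-emptiness of $\Delta$ and completes the proof.

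I do not anticipate a genuine obstacle here; the only care needed is in verifying the three-way dichotomy for essential subsurfaces (overlap versus nesting versus disjoint realisation) and in keeping straight the definition $\bdy_S X = \bdy X \setminus \bdy S$, which is exactly what forces the final contradiction from $\Delta \subset \bdy_S X$.
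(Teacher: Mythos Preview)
Your proof is correct and follows essentially the same route as the paper's. Both arguments invoke \reflem{ArcWitness} to see that $X$ and $Y$ each contain $\Delta$ (ruling out disjoint realisation) and \refrem{Nested} to see that equal modified complexity rules out nesting; the paper simply compresses your disjoint-case contradiction into the single phrase ``thus they intersect''.
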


\begin{proof}
By \reflem{ArcWitness} both $X$ and $Y$ contain $\Delta$;
thus they intersect.
Since they have the same modified complexity, by \refrem{Nested} they cannot be nested.
Thus they overlap. 
\end{proof}

We let $\MCG(S, \Delta)$ be the mapping class group for the pair $(S, \Delta)$:
the group of mapping classes that preserve $\Delta$ setwise.  
We say that a pair of arcs $\alpha, \beta \in \calA(S, \Delta)$ have the same \emph{topological type} (or more simply, the same \emph{type}) if there is a mapping class $f \in \MCG(S, \Delta)$ so that $f(\alpha) = \beta$.   

\begin{lemma}
\label{Lem:ArcDiamBound}
The quotient $\calA(S, \Delta) / \MCG(S, \Delta)$ has diameter at most two. 
\end{lemma}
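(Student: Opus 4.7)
The plan relies on the change-of-coordinates principle: $\MCG(S, \Delta)$ acts transitively on the essential arcs of any fixed topological type in $\calA(S, \Delta)$. Thus the vertices of the quotient graph are in bijection with topological types of arcs, and two types are adjacent iff some two representatives are disjoint. I would show the existence of a single ``reference'' type that is adjacent to every other type; this upgrades diameter at most two.

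More concretely, I would first fix a component $\delta \subset \Delta$ and let $T_0$ be the topological type of an essential arc $\gamma_0$ whose regular neighborhood $N(\delta \cup \gamma_0)$ is a pair of pants in $S$ whose third boundary $c_0 = \bdy N \setminus \bdy S$ is simple closed, essential, and non-peripheral in $S$. The hypothesis $\xi'(S) \geq 1$ (equivalently $4g + b \geq 4$) is precisely what is needed to guarantee such a $\gamma_0$ exists in every case allowed by the lemma.

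Second, for each $\alpha \in \calA(S, \Delta)$ I would construct an arc $\gamma_\alpha$ of type $T_0$ disjoint from $\alpha$. Cutting $S$ along $\alpha$ yields $S_\alpha$, whose boundary contains portions of the components of $\Delta$ (some split, others untouched). Because $\xi'(S_\alpha) \geq \xi'(S) - 1 \geq 0$, there is enough room in $S_\alpha$ to find a pair of pants $P$ sharing one boundary with a piece $\delta'$ of some component of $\Delta$ and whose remaining boundary $c$ is essential and non-peripheral in $S$. Taking $\gamma_\alpha$ to be the essential arc in $P$ with both endpoints on $\delta'$ produces the required arc of type $T_0$ (after possibly relabeling which component of $\Delta$ plays the role of the distinguished component, which is permitted since $\MCG(S, \Delta)$ is allowed to permute the components of $\Delta$).

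Third, applying the construction to $\beta$ as well produces $\gamma_\beta$ of type $T_0$ disjoint from $\beta$. By change of coordinates there is $f \in \MCG(S, \Delta)$ with $f(\gamma_\alpha) = \gamma_\beta$, so $[\gamma_\alpha] = [\gamma_\beta]$ in the quotient, yielding the path $[\alpha] - [\gamma_\alpha] - [\beta]$ of length at most two.

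The main obstacle is the second step: certifying that the cut surface $S_\alpha$ always contains a pair of pants of the prescribed form. This requires case analysis based on whether $\alpha$ is separating, on how its endpoints are distributed among the components of $\Delta$, and on the components of $\Delta$ disjoint from $\alpha$. Small cases --- such as $(g, b) = (1, 1)$ or $(0, 4)$ with $\Delta$ a single component --- need to be handled directly, either by adjusting the choice of $\delta$ or by verifying the pair-of-pants claim by hand.
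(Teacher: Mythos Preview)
Your overall plan --- produce a single reference type $T_0$ and show, via change of coordinates, that every arc is adjacent to a representative of $T_0$ --- is exactly the paper's strategy. The difference is entirely in the choice of reference type, and here your proposal has a concrete error and an ambiguity.

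The error: when $\gamma_0$ has both endpoints on a single component $\delta$, the pair of pants $N(\delta \cup \gamma_0)$ has \emph{two} boundary curves in the interior of $S$, not one; so ``the third boundary $c_0 = \bdy N \setminus \bdy S$'' is not a single curve unless one of the two surgered curves happens to be peripheral. In particular, for $b = 1$ (any genus, not just the small cases you flag) your description of $T_0$ cannot be satisfied as written.

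The ambiguity: even after correcting the count, arcs of the shape you describe do not form a single $\MCG(S,\Delta)$-orbit. The type of such a $\gamma_0$ depends on whether it separates, on the homeomorphism types of the complementary pieces, and (when $\bdy S \neq \Delta$) on whether a peripheral surgered curve bounds a component of $\Delta$ or of $\bdy S - \Delta$. So your $T_0$ is really a collection of types, and you would still owe an argument that some \emph{fixed} one of them is adjacent to every $\alpha$.

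The paper sidesteps both issues by not seeking a uniform $T_0$. It breaks into three cases and in each picks a reference type that is transparently a single orbit: if $|\Delta| \geq 2$, an arc joining two components of $\Delta$; if $|\Delta| = 1 < |\bdy S|$, an arc (with both ends on $\Delta$) cutting off a single component of $\bdy S - \Delta$; if $|\bdy S| = 1$, a non-separating arc. In each case the existence of a disjoint representative is immediate, so the promised ``case analysis'' is three lines rather than the pair-of-pants bookkeeping your $T_0$ would require.
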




\begin{proof}
Suppose that $\alpha \in \calA(S, \Delta)$ is an arc. 
We break the proof into cases, depending on the number of components of $\bdy S$ and of $\Delta$.

Suppose that $\Delta$ has at least two components.
Then $\alpha$ is disjoint from some $\gamma \in \calA(S, \Delta)$ meeting two components of $\Delta$.  
Since we made no assumptions on $\alpha$, we find that in the quotient, all vertices are at most distance one from $[\gamma]$.  
Thus the diameter is at most two. 

Suppose that $\Delta$ has only one component, but $\bdy S$ has at least two. 
Let $\delta$ be some component of $\bdy S - \Delta$.
Then $\alpha$ is disjoint from some arc $\gamma \in \calA(S, \Delta)$ so that $\gamma$ separates $\delta$ from the rest of $S$.
We again obtain a diameter bound of two. 

Suppose that $\bdy S$ has a single component, which necessarily equals $\Delta$. Then $\alpha$ is disjoint from some arc $\gamma \in \calA(S, \Delta)$ so that $\gamma$ is non-separating.  
This gives the diameter bound and finishes the proof. 
\end{proof}

\subsection{Families of witnesses}
\label{Sec:ArcFamilies}

Fix a number $c \leq \xi'(S)$. 
The collection 
\[
\calF_c = \{ X \subset S \st \mbox{$X$ is a witness for $\calA(S, \Delta)$ and $\xi'(X) = c$} \}
\]
is called a \emph{family}.  
We only consider non-empty families.  

Suppose that $X, Y, Z \in \calF_c$ are witnesses, with $Y$ distinct from both $X$ and $Z$.  Then we define 
\[
d_Y(X,Z) = d_{\calC(Y)}(\pi_Y(X), \pi_Y(Z))
\]
Note that this is well-defined by \refcor{ArcWitnessesOverlap}.  
We note that there is an abuse of notation here: 
the family $\calF_c$ consists of witnesses $X$ -- that is, surfaces -- not metric graphs.
However each witness $X$ gives a metric graph, namely $\calC(X)$. 
We trust this will not cause confusion. 

\begin{lemma}
\label{Lem:ArcAxioms}
For every $c$ the family $\calF_c$ satisfies the three BBF axioms given in \refsec{BBF}.
Since there are only finitely many of these families, there is a common constant $\Link$ that works for all of them simultaneously. 
\end{lemma}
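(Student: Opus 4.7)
The plan is to verify the three BBF axioms in turn for a single family $\calF_c$, and then observe that since $c$ ranges over the finite set $\{1,2,\ldots,\xi'(S)\}$, a single constant $\Link$ works for all families simultaneously. Throughout, distinct members $X, Y \in \calF_c$ have the same modified complexity, so by \refcor{ArcWitnessesOverlap} they overlap; in particular $\pi_Y(X) = \pi_Y(\bdy_S X)$ is non-empty, so all of the relevant projection distances are defined.

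For \textbf{Axiom~\ref{Ax:BoundedProjections} (bounded projections)}, suppose $X, Y \in \calF_c$ are distinct. Since $X$ and $Y$ overlap, \reflem{Overlap} immediately gives $\diam_Y(\pi_Y(X)) \leq 2$, so this axiom holds with any $\Link \geq 2$.

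For \textbf{Axiom~\ref{Ax:Behrstock} (Behrstock inequality)}, let $X, Y, Z \in \calF_c$ be pairwise distinct. Again by \refcor{ArcWitnessesOverlap} the surfaces $X, Y, Z$ pairwise overlap. The classical Behrstock inequality for subsurface projections — Behrstock's original argument~\cite{Behrstock}, later made effective by Mangahas and by others — asserts that there is a constant depending only on $S$ (indeed a universal constant) so that for any triple of pairwise overlapping essential subsurfaces of $S$, at most one of the three projection distances $d_X(Y,Z)$, $d_Y(X,Z)$, $d_Z(X,Y)$ exceeds that constant. Taking $\Link$ larger than this constant verifies the axiom.

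For \textbf{Axiom~\ref{Ax:LargeLink} (large links)}, fix distinct $X, Z \in \calF_c$ and set $\alpha = \bdy_S X$ and $\gamma = \bdy_S Z$, viewed as vertices (or bounded sets of vertices) in $\AC(S)$. The Masur--Minsky bounded geodesic image theorem, together with their large link lemma~\cite[Lemma~6.2]{mm2}, yields that the set of essential subsurfaces $Y \subset S$ with $d_Y(\alpha, \gamma) > K$ is finite for any sufficiently large $K$ (in fact its cardinality is bounded linearly in $d_{\AC(S)}(\alpha, \gamma)$). Applying this with $K = \Link$ and intersecting with $\calF_c$ gives the required finiteness. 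This is the step that genuinely uses the geometry of the curve complex; the first two axioms are essentially combinatorial.

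Finally, since the three constants produced above depend only on $S$ and not on $c$, and since $c$ ranges over a finite set of integers bounded by $\xi'(S)$, we may choose a single $\Link$ that works uniformly for every family $\calF_c$. The main obstacle is simply quoting the Behrstock and large-link results in the form needed; once these are in hand, the verification is routine.
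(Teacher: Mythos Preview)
Your proof is correct and follows essentially the same approach as the paper: verify \refax{BoundedProjections} via \refcor{ArcWitnessesOverlap} and \reflem{Overlap}, verify \refax{Behrstock} via \refcor{ArcWitnessesOverlap} and the classical Behrstock inequality (the paper cites~\cite{Behrstock06} and~\cite{Ma}), and verify \refax{LargeLink} via~\cite[Lemma~6.2]{mm2} (the paper also points to~\cite[Lemma~5.3]{bbf} for a concrete bound). Your added remark that the constants depend only on $S$, hence a single $\Link$ serves all $c$, is exactly the intended justification for the final sentence of the statement.
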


\begin{proof}
\refax{BoundedProjections} follows from \refcor{ArcWitnessesOverlap} and \reflem{Overlap}.

\refax{Behrstock} follows from \refcor{ArcWitnessesOverlap} and the usual Behrstock inequality~\cite[Theorem~4.3]{Behrstock06}.  
See~\cite[Lemma 2.5]{Ma} for an elementary proof following ideas of Leininger.

\refax{LargeLink} appears as Lemma~6.2 in~\cite{mm2}. 
See \cite[Lemma~5.3]{bbf} for a proof giving a concrete bound and avoiding the machinery of hierarchies.
\end{proof}

We now apply the BBF construction, outlined in \refsec{BBF}, to each family $\calF_c$.  
This gives us a quasi-tree of curve graphs $\calC(\calF_c)$. 
We deduce that $\calC(\calF_c)$ is a hyperbolic metric graph where each of the curve complexes $\calC(X)$, for $X \in \calF_c$, embeds as a totally geodesic subgraph.  
From~\cite[Corollary~1.1]{bb} and \refthm{BBFBound} we deduce that 
\[
\asydim \calC(\calF_c) \leq c + 1 
\]
If $c = 1$ then \refthm{BBFBoundTree} allows us to sharpen the bound:
\[
\asydim \calC(\calF_1) \leq 1 
\]
On the other hand, if $c = 4g + b - 3$ then $\calF_c = \{S\}$ and we have
\[
\asydim \calC(\calF_c) \leq 4g + b - 3
\]
We now define $\calP(S, \Delta)$ to be the product, equipped with the $\ell^1$ metric, of the quasi-trees of curve graphs $\calC(\calF_c)$ as $c$ ranges from one to $\xi'(S)$.
From the above and from \reffac{Product} we deduce the following. 

\begin{corollary}
\[
\pushQED{\qed}
\asydim \calP(S, \Delta) \leq \frac{(4g+b) (4g+b-3)}{2} - 2 \qedhere
\popQED
\]
\end{corollary}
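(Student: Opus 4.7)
The plan is to combine \reffac{Product} with the three regime bounds on $\asydim \calC(\calF_c)$ that the excerpt has just assembled immediately above the statement. Since $\calP(S, \Delta)$ is by definition the $\ell^1$ product of the quasi-trees of graphs $\calC(\calF_c)$ as $c$ runs from $1$ to $n := \xi'(S) = 4g + b - 3$, iterating \reffac{Product} gives
\[
\asydim \calP(S, \Delta) \;\leq\; \sum_{c=1}^{n} \asydim \calC(\calF_c).
\]

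Next I would plug in the three bounds just established. The endpoint $c = 1$ contributes at most $1$, via \refthm{BBFBoundTree} (the annular curve graphs and their bounded-complexity analogues are uniformly quasi-isometric to quasi-trees). The intermediate values $2 \leq c \leq n - 1$ each contribute at most $c + 1$, via \refthm{BBFBound} combined with the Bestvina--Bromberg bound $\asydim \calC(X) \leq \xi'(X) = c$ for $X \in \calF_c$. Finally, the top value $c = n$ contributes at most $n$, since $\calF_n = \{S\}$ and so $\calC(\calF_n)$ reduces to $\calC(S)$ itself, whose asymptotic dimension is bounded by $\xi'(S) = n$.

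It then remains only to verify the arithmetic identity
\[
1 + \sum_{c=2}^{n-1}(c+1) + n \;=\; \frac{n(n-1)}{2} + 2(n-1) \;=\; \frac{(4g+b)(4g+b-3)}{2} - 2,
\]
which one checks by expanding the middle sum as $\tfrac{n(n-1)}{2} - 1 + (n-2)$ and then substituting $n = 4g + b - 3$.

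No serious obstacle is expected here. The substantive work was done in the preceding subsection, where the three families of bounds on $\asydim \calC(\calF_c)$ were obtained; the present corollary is then purely a packaging step via \reffac{Product} and an elementary arithmetic computation. The only place requiring a moment's care is the small $n$ case where the middle range $2 \leq c \leq n-1$ may be empty, which one verifies by direct inspection.
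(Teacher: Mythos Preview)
Your proposal is correct and follows exactly the approach the paper takes: the paper simply states ``From the above and from \reffac{Product} we deduce the following'' without spelling out the arithmetic, and you have made explicit the summation $1 + \sum_{c=2}^{n-1}(c+1) + n$ and its evaluation. One minor slip: the witnesses in $\calF_1$ are four-holed spheres (with Farey-graph curve complexes), not annuli, but this does not affect the argument.
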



\subsection{Embedding the arc graph}
\label{Sec:ArcEmbed}

In this section we fix the constants $\Link$, $\Cutoff$, and $\Close$.
We then state and prove \refthm{ArcEmbed}.  

The constant $\Link$ is the larger of $13$ (as explained in the proof of \reflem{ArcCoarseLip}) and the constant given by \reflem{ArcAxioms}.
The constant $\Cutoff$ is now given by~\cite[Theorem~A]{bbf}.
Finally, the constant $\Close$ is provided by \reffac{Close}.
We take $\calP(S, \Delta)$ to be the product of the resulting quasi-trees. 
Here is the statement. 

\begin{theorem}
\label{Thm:ArcEmbed}
There is a quasi-isometric embedding $\phi$ of the arc graph $\calA(S, \Delta)$ into the product $\calP(S, \Delta)$ of quasi-trees of curve graphs.  
Moreover, $\phi$ is equivariant with respect to the action of the mapping class group $\MCG(S, \Delta)$. 
\end{theorem}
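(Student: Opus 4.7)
The plan is to construct $\phi$ coordinatewise as $\phi = (\phi_c)_{c=1}^{\xi'(S)}$, where each $\phi_c \from \calA(S, \Delta) \to \calC(\calF_c)$ is defined via subsurface projection, and then to sandwich the arc-graph metric between the upper and lower halves of \refthm{BBFEstimate} in each factor, using a Masur-Schleimer-style distance formula for $\calA(S, \Delta)$ as the bridge. For each $c$ and each arc $\alpha$, I pick (equivariantly under $\MCG(S, \Delta)$) a witness $X_c(\alpha) \in \calF_c$ and define $\phi_c(\alpha)$ to be a vertex of $\pi_{X_c(\alpha)}(\alpha)$, viewed inside $\calC(\calF_c)$ via \reffac{Embed}. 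The selection is made so that for every other $B \in \calF_c$ the boundary projection $\pi_B(X_c(\alpha)) = \pi_B(\bdy X_c(\alpha))$ agrees with $\pi_B(\alpha)$ up to bounded additive error; this is possible thanks to \refcor{ArcWitnessesOverlap}, which forces any two distinct witnesses in $\calF_c$ to overlap and hence project to bounded-diameter subsets of each other's curve graphs.

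For the upper bound (coarse Lipschitz), take disjoint arcs $\alpha, \beta$. By \reflem{ArcWitness}, every subsurface projection satisfies $d_X(\alpha, \beta) \leq 2$, so in each factor the upper half of \refthm{BBFEstimate} bounds $d_{\calC(\calF_c)}(\phi_c(\alpha), \phi_c(\beta))$ by a uniform constant (every threshold summand vanishes once $\Cutoff$ is taken large). Summing over the finitely many values of $c$ yields the desired Lipschitz bound for $\phi$ into $(\calP(S, \Delta), \ell^1)$.

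For the lower bound, I invoke the Masur-Schleimer distance formula for $\calA(S, \Delta)$: for a sufficiently large threshold $K_0$,
\[
d_{\calA(S, \Delta)}(\alpha, \beta) \emul \sum_{c=1}^{\xi'(S)} \sum_{X \in \calF_c} [d_X(\alpha, \beta)]_{K_0}.
\]
The design of $\phi_c$ converts each inner sum into $\sum_{X \in \calF_c} [d_X(\phi_c(\alpha), \phi_c(\beta))]_{K_0}$ up to bounded additive error per summand. The lower half of \refthm{BBFEstimate} then bounds each inner sum by $2\, d_{\calC(\calF_c)}(\phi_c(\alpha), \phi_c(\beta))$, and adding over $c$ recovers the $\ell^1$ distance in $\calP(S, \Delta)$, giving the desired lower estimate. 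Equivariance of $\phi$ under $\MCG(S, \Delta)$ is automatic from the naturality of the construction.

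The main obstacle will be executing the equivariant choice of $X_c(\alpha)$ with the required consistency between $\pi_B(X_c(\alpha))$ and $\pi_B(\alpha)$ for all $B \neq X_c(\alpha)$ in $\calF_c$; once this is in place, the BBF distance estimates in each factor and the Masur-Schleimer distance formula combine cleanly, and the per-summand comparison between $[d_X(\alpha, \beta)]_{K_0}$ and $[d_X(\phi_c(\alpha), \phi_c(\beta))]_{K_0}$ follows from the bounded diameter of subsurface projections together with \refcor{ArcWitnessesOverlap}.
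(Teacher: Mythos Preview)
Your overall architecture---define $\phi$ coordinatewise via subsurface projection, get the Lipschitz upper bound factor by factor, and get the lower bound by playing the Masur--Schleimer distance formula against the lower half of \refthm{BBFEstimate}---is exactly the paper's. The gap is in the construction of $\phi_c$.

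A strictly equivariant single-valued assignment $\alpha \mapsto X_c(\alpha) \in \calF_c$ does not exist in general. Equivariance forces $X_c(\alpha)$ to be fixed by $\Stab_{\MCG(S,\Delta)}(\alpha)$, but for $c < \xi'(S)$ that stabiliser contains Dehn twists about curves disjoint from $\alpha$ which move every proper-subsurface witness; so no single witness can be invariant. You flag this as ``the main obstacle'', but it is not a technicality to be executed---it is an obstruction. Note also that \refcor{ArcWitnessesOverlap} does not do what you claim for it: it tells you that $\pi_B(A)$ has bounded diameter when $A,B \in \calF_c$ overlap, but says nothing about why $\pi_B(\alpha)$ and $\pi_B(X_c(\alpha)) = \pi_B(\bdy_S X_c(\alpha))$ should be close. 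That closeness requires $\bdy_S X_c(\alpha)$ to be near $\alpha$ in $\AC(S)$, which is exactly the missing ingredient.

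The paper's fix is to make $\phi_c$ a \emph{coarse map} (a relation) rather than a function:
\[
\phi_c(\alpha) \;=\; \bigl\{\, \pi_Y(\beta) \;:\; d_\calA(\alpha,\beta) \le 2 \ \text{and}\ Y \in \calF_c \ \text{carries}\ \beta \,\bigr\}.
\]
This is automatically $\MCG(S,\Delta)$-equivariant because the defining predicate is invariant, and it is non-empty by \reflem{ArcDiamBound} (every arc is within distance two of one carried by a witness of any given complexity). Since each $\pi_Y(\beta) \in \phi_c(\alpha)$ lies within distance three of $\alpha$ in $\AC(S)$, one gets the uniform comparison $|d_X(\alpha,\gamma) - d_X(\phi_c(\alpha),\phi_c(\gamma))| \le 12$ directly (\reflem{ArcNoMove}); bounded diameter of $\phi_c(\alpha)$ inside $\calC(\calF_c)$ then comes from \reffac{Close}. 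With $\phi_c$ redefined this way, your upper- and lower-bound arguments run essentially as you wrote them.
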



From this, and from \reffac{Subspaces}, we deduce the following.

\begin{corollary}
\label{Cor:ArcBound}
Suppose that $S = S_{g,b}$ has non-empty boundary.  
Suppose that $\Delta \subset \bdy S$ is a non-empty union of components.  
Finally, suppose that $\xi'(S) \geq 1$.  
Then 
\[
\pushQED{\qed}
\asydim \calA(S, \Delta) \leq \frac{(4g+b) (4g+b-3)}{2} - 2 \qedhere
\popQED
\]
\end{corollary}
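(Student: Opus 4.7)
The plan is to deduce \refcor{ArcBound} as an essentially formal consequence of the three pieces of machinery that have already been assembled in the excerpt: the quasi-isometric embedding provided by \refthm{ArcEmbed}, the monotonicity of asymptotic dimension under coarse embeddings from \reffac{Subspaces}, and the bound on $\asydim \calP(S, \Delta)$ obtained at the end of \refsec{ArcFamilies}.

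First I would invoke \refthm{ArcEmbed} to obtain the map
\[
\phi \from \calA(S, \Delta) \to \calP(S, \Delta),
\]
which is a quasi-isometric embedding, hence in particular a coarse embedding in the sense of \refsec{Background}. This is the step that does all of the real work, and it is where the author has placed the genuine content: combining the subsurface projection technology of~\cite{ms} with the BBF distance estimate (\refthm{BBFEstimate}) to show that summing coarse distances across the families $\calF_c$ controls the arc graph metric. For the purposes of the corollary I only need to know that such a $\phi$ exists.

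Next I would apply \reffac{Subspaces} to the coarse embedding $\phi$ to conclude
\[
\asydim \calA(S, \Delta) \leq \asydim \calP(S, \Delta).
\]
Then I would invoke the corollary at the end of \refsec{ArcFamilies}, which gives
\[
\asydim \calP(S, \Delta) \leq \frac{(4g+b)(4g+b-3)}{2} - 2.
\]
Chaining these two inequalities yields the stated bound. The hypotheses $b > 0$ and $\xi'(S) \geq 1$ ensure that $\calA(S, \Delta)$ is non-empty and that the families $\calF_c$ used to build $\calP(S, \Delta)$ are defined for $c = 1, \ldots, \xi'(S)$, so the product construction makes sense.

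There is no real obstacle left at this stage; the only subtlety worth flagging is that the quasi-isometric embedding of \refthm{ArcEmbed} must be a coarse embedding in the precise sense used by \reffac{Subspaces}, but this is immediate since affine control functions satisfy the required divergence condition $\lim_{t \to \infty} F(t) = \infty$. The proof therefore reduces to a three-line citation of the preceding results.
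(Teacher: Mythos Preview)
Your proposal is correct and matches the paper's approach exactly: the paper deduces \refcor{ArcBound} directly from \refthm{ArcEmbed} and \reffac{Subspaces}, together with the bound on $\asydim \calP(S, \Delta)$ established at the end of \refsec{ArcFamilies}. There is nothing to add.
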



We now turn to the proof of \refthm{ArcEmbed}.  
Fix a modified complexity~$c$.  

\begin{definition}
\label{Def:ArcCarries}
Suppose that $\beta \in \calA(S, \Delta)$ is an arc.
Suppose that $Y \in \calF_c$ is a witness, and $\beta$ has a representative contained in $Y$.
Then we say that $Y$ \emph{carries} $\beta$.  
\end{definition}

Note that $\pi_Y(\beta) \subset \calC(Y)$ is one or two essential, non-peripheral curves in $Y$.  
Recall, by \reffac{Embed}, that $\calC(Y)$ embeds into $\calC(\calF_c)$. 
We now define a relation $\phi_c \from \calA(S, \Delta) \to \calC(\calF_c)$ as follows:
\[
\phi_c(\alpha) = \{ \pi_Y(\beta) \st \mbox{$d_\calA(\alpha, \beta) \leq 2$ and $Y \in \calF_c$ carries $\beta$} \}
\]

\begin{lemma}
\label{Lem:ArcCoarseLip}
The relation $\phi_c$ is an equivariant coarse Lipschitz map.
\end{lemma}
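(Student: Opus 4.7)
The plan is to verify three properties of $\phi_c$: (a) it assigns a non-empty set to every arc; (b) these sets have uniformly bounded diameter in $\calC(\calF_c)$, so $\phi_c$ is a genuine coarse map; and (c) $\phi_c$ is coarsely Lipschitz. Equivariance under $\MCG(S, \Delta)$ will be immediate from the definition, since the carrying condition ``$\beta \subset Y$'' and subsurface projection are both natural, and the BBF construction passes this naturality through to $\calC(\calF_c)$ via the consistency properties recorded in \refsec{BBF}.

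For (a), I will fix once and for all a template $(Y_0, \gamma_0)$ consisting of a witness $Y_0 \in \calF_c$ (non-empty by hypothesis) and an essential arc $\gamma_0 \in \calA(S, \Delta)$ properly contained in $Y_0$; such a $\gamma_0$ exists because $\Delta \subset Y_0$ and $\xi'(Y_0) = c \geq 1$. For any $\alpha \in \calA(S, \Delta)$, \reflem{ArcDiamBound} produces $f \in \MCG(S, \Delta)$ with $d_\calA(\alpha, f(\gamma_0)) \leq 2$, and $f(\gamma_0)$ is carried by $f(Y_0) \in \calF_c$, so $\pi_{f(Y_0)}(f(\gamma_0))$ lies in $\phi_c(\alpha)$.

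For (b) and (c) simultaneously, the triangle inequality reduces the task to producing a uniform constant $\Coarse$ such that, whenever $d_\calA(\alpha_1, \alpha_2) \leq 1$ and $p_i = \pi_{Y_i}(\beta_i) \in \phi_c(\alpha_i)$, we have $d_{\calC(\calF_c)}(p_1, p_2) \leq \Coarse$; the special case $\alpha_1 = \alpha_2$ then yields the diameter bound. Note that $d_\calA(\beta_1, \beta_2) \leq 5$. I will apply the BBF distance estimate (\refthm{BBFEstimate}), aiming to bound every BBF-extended projection $d_B(p_1, p_2)$, for $B \in \calF_c$, by a constant strictly less than $\Cutoff$, so that every term in the sum is cut off to zero.

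The key observation is that since $\beta_i \subset Y_i$, the arc $\beta_i$ is disjoint from $\bdy_S Y_i$; hence, for any witness $B \neq Y_i$---which overlaps $Y_i$ by \refcor{ArcWitnessesOverlap}---the standard subsurface projection satisfies $d_B(\beta_i, Y_i) \leq 2$. A short case analysis on whether $B$ equals $Y_1$, $Y_2$, or neither then expresses $d_B(p_1, p_2)$ (in the BBF-extended sense) as a sum of terms of the forms $d_B(\beta_i, Y_i)$ and $d_B(\beta_1, \beta_2)$; the latter is uniformly bounded because $\pi_B$ is $2$-coarsely Lipschitz by \reflem{ArcWitness}. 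With $\Link \geq 13$ and $\Cutoff$ chosen sufficiently large by \cite[Theorem~A]{bbf}, every such term falls below $\Cutoff$, and the distance estimate then gives the uniform bound $d_{\calC(\calF_c)}(p_1, p_2) \leq 6\Cutoff$. The main obstacle is precisely this bookkeeping: one must unwind the BBF-extended definition of $\pi_B$ applied to a vertex of $\calC(\calF_c)$ lying in a subsurface different from $B$, and use the disjointness $\beta_i \cap \bdy_S Y_i = \emptyset$ to pin constants down tightly enough that the entire infinite sum in the distance estimate collapses.
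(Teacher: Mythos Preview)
Your proposal is correct and tracks the paper's proof closely: the non-emptiness via \reflem{ArcDiamBound}, the equivariance from naturality, and the core computation---bounding $d_Z(Y_1,Y_2)$ uniformly using that $\beta_i$ is disjoint from $\bdy_S Y_i$ together with the $2$--Lipschitz property of subsurface projection---are all the same.

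The one genuine difference is in how you pass from ``all projections $d_B(p_1,p_2)$ are uniformly small'' to ``$d_{\calC(\calF_c)}(p_1,p_2)$ is small''. The paper observes that $d_Z(Y,Y') \leq 12 < \Link$ for every $Z$, so the set in \refax{LargeLink} is empty, and then invokes \reffac{Close} to get $d_\calC(\pi_Y(Y'),\pi_{Y'}(Y)) \leq \Close$; a short triangle-inequality inside $\calC(Y)$ and $\calC(Y')$ finishes with the bound $\Close + 20$. You instead feed the uniform projection bound directly into the upper half of the distance estimate (\refthm{BBFEstimate}), choosing $\Cutoff$ large enough that every term is cut off, yielding $6\Cutoff$. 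Both are standard BBF manoeuvres; the paper's route is slightly lighter (it avoids the full distance estimate and explains why $\Link$ is taken to be at least $13$), while yours is a clean one-shot application. Note, incidentally, that your argument does not actually use $\Link \geq 13$---what you need is that $\Cutoff$ exceeds your uniform projection bound (roughly $14$), which is available since $\Cutoff$ may be chosen as large as desired.
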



\begin{proof}
Equivariance follows from the definition.  

The set $\phi_c(\alpha)$ is non-empty by \reflem{ArcDiamBound}.  
Suppose that $Y$, $Y'$, and $Z$ lie in $\calF_c$.  
Suppose that $\beta$ and $\beta'$ are carried by $Y$ and $Y'$, respectively, and are distance at most two from $\alpha$. 
Thus $d_\calA(\beta, \beta') \leq 4$.
We deduce that $d_Z(Y, Y')$ is at most twelve.

We now recall our choices (made above) of $\Link$, $\Cutoff$, and $\Close$. 
In particular we have $\Link > 12$. 
Thus, by \reffac{Close}, we have that $\pi_{Y}(\bdy Y')$ and $\pi_{Y'}(\bdy Y)$ are distance at most $\Close$ in $\calC(\calF_c)$.
Thus $d_\calC(\beta, \beta') \leq \Close + 20$, bounding the diameter of $\phi_c(\alpha)$.
Thus $\phi_c$ is a coarse map.

Furthermore, by \cite[Lemma~2.3]{mm2}, if $d_\calA(\alpha, \alpha') = 1$ then the distance between $\phi_c(\alpha)$ and $\phi_c(\alpha')$ is also bounded in terms of $\Close$.  
Applying the triangle inequality gives the result. 
\end{proof}

\begin{lemma}
\label{Lem:ArcNoMove}
Suppose that $\alpha, \gamma \in \calA(S, \Delta)$ are arcs and $X$ is a witness with $\xi'(X) = c$.  Then 
\[
|d_X(\alpha, \gamma) - d_X(\phi_c(\alpha), \phi_c(\gamma))| \leq 12
\]
\end{lemma}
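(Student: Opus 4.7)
My plan is to reduce the claim to a per-point estimate: I will show that for every $p \in \phi_c(\alpha)$, the BBF projection $\pi_X(p) \in \calC(X)$ lies within distance six of $\pi_X(\alpha)$, and symmetrically for $q \in \phi_c(\gamma)$. Since the convention at the start of \refsec{Background} gives $d_X(U, V) = \diam_{\calC(X)}(\pi_X(U) \cup \pi_X(V))$, two applications of the triangle inequality then convert these per-point bounds into a two-sided estimate $|d_X(\alpha, \gamma) - d_X(\phi_c(\alpha), \phi_c(\gamma))| \leq 2 \cdot 6 = 12$.

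For the per-point bound, fix $p \in \phi_c(\alpha)$ and unpack \refdef{ArcCarries}: there is a witness $Y \in \calF_c$ carrying an arc $\beta$ with $d_\calA(\alpha, \beta) \leq 2$ and $p \in \pi_Y(\beta)$. I would then case split on whether $Y = X$. If $Y = X$, then $p$ lies in $\calC(X)$, the BBF projection $\pi_X$ acts as the identity there, and the coarse Lipschitz estimate of \reflem{ArcWitness} (with constant $2$) gives $\diam_{\calC(X)}(\pi_X(\alpha) \cup \{p\}) \leq 4$. If $Y \neq X$, the convention extending $\pi_X$ to the quasi-tree (see \reffac{Embed}) forces $\pi_X(p) = \pi_X(Y) = \pi_X(\bdy_S Y)$. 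By \refcor{ArcWitnessesOverlap} the witnesses $X$ and $Y$ overlap, and since $\beta$ is carried by $Y$ it admits a representative disjoint from $\bdy_S Y$; the standard Masur--Minsky estimate \cite[Lemma~2.3]{mm2} therefore gives $d_X(\beta, Y) \leq 2$. Combining with the coarse Lipschitz bound $d_X(\alpha, \beta) \leq 4$ yields the claimed $\diam_{\calC(X)}(\pi_X(\alpha) \cup \pi_X(p)) \leq 6$.

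The main obstacle is purely bookkeeping: the symbol $\pi_X$ denotes two distinct maps, the surface subsurface projection $\AC(S) \to \calC(X)$ of \refdef{SubSurfaceProjection} and the BBF closest-points projection $\calC(\calF_c) \to \calC(X)$ of \refsec{BBF}. These agree on $\calC(X)$ itself but disagree on $\calC(Y)$ for $Y \neq X$, where the BBF projection collapses the whole copy of $\calC(Y)$ to the single set $\pi_X(\bdy_S Y)$. The case split above is exactly where this discrepancy is absorbed, and the extra $+2$ in the second case is precisely the price of that collapse. Once this is sorted out, the triangle inequality mechanically produces the constant $12$.
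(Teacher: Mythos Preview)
Your proposal is correct and follows essentially the same approach as the paper's proof: both case-split on whether $Y = X$, obtain the bound $4$ in the first case and $6$ in the second (via $d_X(\alpha,\beta) \leq 4$ plus $d_X(\beta,\bdy_S Y) \leq 2$), and then apply the triangle inequality twice to reach $12$. Your write-up is in fact more explicit than the paper's about the overloading of $\pi_X$, which is exactly the subtlety that needs care here.
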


\begin{proof}
Suppose that $\beta \in \calA(S, \Delta)$ has $d_\calA(\alpha, \beta) \leq 2$ and $\beta$ is carried by some witness $Y$ with $\xi'(Y) = c$.  
Note that $\alpha$ and $\pi_Y(\beta)$ are distance at most three in $\AC(S)$, the arc and curve complex for $S$.  

Now, if $X = Y$ then we have
\[
d_X(\alpha, \pi_Y(\beta)) 
   = d_X(\alpha, \pi_X(\beta)) 
   = d_X(\alpha, \beta)
   \leq 4
\]
by \cite[Lemma~2.3]{mm2}.  
If $X \neq Y$ then instead we have
\[
d_X(\alpha, \pi_Y(\beta)) 
   = d_{\calC(X)}(\pi_X(\alpha), \pi_X(Y))
   \leq 6
\]
This holds for all $\beta$ arising in the definition of $\phi_c(\alpha)$.  The lemma now follows by applying the triangle inequality twice.
\end{proof}



We now define $\phi \from \calA(S, \Delta) \to \calP(S, \Delta)$ by taking
\[
\phi(\alpha) = (\phi_c(\alpha))_c
\]
All that remains is to prove that $\phi$ is a quasi-isometric embedding.  
Suppose that $\alpha$ and $\gamma$ are arcs in $\calA(S, \Delta)$.  
We must show that $d_\calA(\alpha, \gamma)$ and $d_\calP(\phi(\alpha), \phi(\gamma))$ are coarsely equal. 

We first bound $d_\calP(\phi(\alpha), \phi(\gamma))$ from above. 
Recall that $\calP(S, \Delta)$ is equipped with the $\ell^1$ metric and so 
\[
d_\calP(\phi(\alpha), \phi(\gamma))
  = \sum_c d_{\calC(\calF_c)}(\phi_c(\alpha), \phi_c(\gamma))
\]
Each of the terms on the right-hand side is bounded in terms of $d_\calA(\alpha, \gamma)$ by \reflem{ArcCoarseLip} and we are done. 

We now bound $d_\calA(\alpha, \gamma)$ from above. 
Since $\calA(S, \Delta)$ is a \emph{combinatorial complex}, in the sense of~\cite[Section~5]{ms}, we have a corollary of~\cite[Theorems~5.14 and~13.1]{ms}.

\begin{theorem}
\label{Thm:ArcEstimate}
Suppose that $S$ and $\Delta$ are as above.
There is a constant $\mathsf{L}$ so that for any $\mathsf{L}' \geq \mathsf{L}$ there is a constant $\Coarse$ with the following property.
For any arcs $\alpha$ and $\gamma$ we have
\[
d_\calA(\alpha, \gamma) =_\Coarse \sum [d_X(\alpha, \gamma)]_{\mathsf{L}'}
\]
where the sum is taken over all witnesses $X$ for $\calA(S, \Delta)$. \qed
\end{theorem}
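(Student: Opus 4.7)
The plan is to deduce Theorem~\ref{Thm:ArcEstimate} directly from the Masur--Schleimer hierarchy theory for combinatorial complexes, without reproving any of the underlying machinery. The three ingredients are: (a) identifying $\calA(S, \Delta)$ as a combinatorial complex, (b) identifying its holes with our witnesses, and (c) citing the general distance estimate.

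\emph{Step 1: Combinatorial complex.} First I would note that $\calA(S, \Delta)$ fits the formalism of a \emph{combinatorial complex} in the sense of~\cite[Section~5]{ms}. This is essentially a book-keeping check: $\calA(S, \Delta)$ carries a cocompact action of $\MCG(S, \Delta)$ (\reflem{ArcDiamBound}), and its vertices are isotopy classes of essential surface objects satisfying the required finiteness and connectivity axioms. This observation is already in the paragraph immediately preceding the theorem.

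\emph{Step 2: Witnesses are holes.} Next I would invoke~\cite[Theorem~5.14]{ms}, which characterises the holes of a combinatorial complex $\calA$ as those essential subsurfaces $X \subset S$ (not annuli or pairs of pants) for which $\pi_X(\alpha)$ is non-empty for every vertex $\alpha \in \calA$. Applied to $\calA(S, \Delta)$, this matches our \reflem{ArcWitness}: the holes are exactly the witnesses, together with the annular holes given by curves cutting every $\alpha \in \calA(S, \Delta)$. (For annular contributions one also needs the annulus case of the subsurface projection definition, which is already built into $d_X$.)

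\emph{Step 3: Distance estimate.} Finally I would cite~\cite[Theorem~13.1]{ms}, the distance estimate theorem for combinatorial complexes: there exists a threshold constant $\mathsf{L}$ such that for every cutoff $\mathsf{L}' \geq \mathsf{L}$ there is a constant $\Coarse = \Coarse(\mathsf{L}')$ with
\[
d_\calA(\alpha, \gamma) =_\Coarse \sum_X [d_X(\alpha, \gamma)]_{\mathsf{L}'},
\]
where the sum is over holes $X$. Combined with Step~2, this is exactly the statement of \refthm{ArcEstimate}.

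\emph{Main obstacle.} There is no substantive obstacle inside this paper; the entire content is a citation of~\cite{ms}. The genuinely hard work lies in~\cite{ms}, where the distance estimate is established via a hierarchy of tight geodesics in curve complexes, the bounded geodesic image theorem, and the Behrstock inequality, all adapted to the combinatorial complex setting. For our purposes we only need to check that the terminology translates cleanly, and that the sum over ``holes'' in~\cite{ms} matches our sum over ``witnesses''; \reflem{ArcWitness} together with \cite[Theorem~5.14]{ms} provides this dictionary.
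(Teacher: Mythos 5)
Your proposal follows essentially the same route as the paper, whose entire ``proof'' is the sentence immediately preceding the theorem: $\calA(S, \Delta)$ is a combinatorial complex, so the claimed estimate is a corollary of \cite[Theorems~5.14 and~13.1]{ms}. One minor bookkeeping point: in \cite{ms}, Theorem~5.14 is the \emph{lower} bound of the distance estimate (Theorem~13.1 being the upper bound), rather than a characterisation of holes; the characterisation you describe in Step~2 comes instead from \cite[Lemmas~5.9 and~7.2]{ms}, which the paper has already repackaged as \reflem{ArcWitness}. Consequently your Step~3 should invoke both Theorems~5.14 and~13.1 for the two directions of the coarse equality $=_\Coarse$, rather than attributing the full two-sided estimate to Theorem~13.1 alone. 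Since you do cite both theorems and the hole--witness dictionary, the argument is sound; only the internal allocation of which ingredient does what is slightly off.
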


Take $\Cutoff' > 12$ sufficiently larger than the constants $\Cutoff$ and $\mathsf{L}$ appearing in Theorems~\ref{Thm:BBFEstimate} and~\ref{Thm:ArcEstimate}, respectively.
Set $\mathsf{L}' = \Cutoff' + 12$.  
Fix a witness $X$ and set $c = \xi'(X)$.   
If a term $d_X(\alpha, \gamma)$ appears in the upper bound of \refthm{ArcEstimate} then, by \reflem{ArcNoMove}, the term $d_X(\phi_c(\alpha), \phi_c(\gamma))$ appears in the lower bound provided by \refthm{BBFEstimate} for the family $\calF_c$.
Also, $d_X(\alpha, \gamma)$ is at most twice $d_X(\phi_c(\alpha), \phi_c(\gamma))$ (by \reflem{ArcNoMove} and because $\Cutoff' > 12$).
Thus $d_\calA(\alpha, \gamma)$ is coarsely bounded above by 
$d_\calP(\phi(\alpha), \phi(\gamma))$, as desired.
This finishes the proof of \refthm{ArcEmbed}. \qed




\section{Bound for the disk complex}
\label{Sec:Disk}

Suppose that $M$ is a spotless compression body, as defined in \refsec{CompressionBodies}.
Suppose that $S = S_{g,b} = \bdy^+\!M$ is the upper boundary.  
We assume that $\xi'(S) > 0$.  
Let $\calD(M, S)$ be the graph of essential disks with boundary in $S$. 

\subsection{Witnesses for the disk complex}

\begin{definition}
An essential subsurface $X \subset S$ is a \emph{witness} for $\calD(M, S)$ if every disk $D \in \calD(M, S)$ cuts $X$. 
\end{definition}

Some authors call such an $X$ \emph{disk-busting}~\cite{ms}. 
We call a witness $X$ \emph{large} if it satisfies
\[
\diam_X(\pi_X(\calD(M, S))) > 60
\]
We now record the classification of large witnesses~\cite[Theorems~10.1, 11.10, 12.1]{ms}.

\begin{theorem}
\label{Thm:DiskWitness}
Suppose that $(M, S)$ is a non-trivial spotless compression body.
Suppose that $X \subset S$ is a large witness for $\calD(M, S)$.
Then we have the following. 
\begin{itemize}
\item
$X$ is not an annulus. 
\item
If $X$ compresses in $M$, then there are disks $D$ and $E$ with boundary contained in, and filling, $X$. 
\item
If $X$ is incompressible in $M$, then there is an orientation $I$--bundle $\rho_F \from T_F \to F$ with $(T_F, \bdy_h T_F) \subset (M, S)$ and with $X$ being a component of $\bdy_h T_F$.  
Also $\bdy_v T_F$ is properly embedded in $(M, S)$ and at least one component of $\bdy_v T_F$ is isotopic into $S$.  Also, $F$ admits a pseudo-Anosov map. \qed
\end{itemize}
\end{theorem}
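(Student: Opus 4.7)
The plan is to handle the three conclusions separately, each time exploiting the strong hypothesis $\diam_X(\pi_X(\calD(M, S))) > 60$ to rule out degenerate configurations.  The statement is essentially a packaging of the witness analysis developed in~\cite{ms}, so I would verify each bullet by appealing to the structure theorems there, adapting them to the compression body setting.

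First I would rule out $X$ being an annulus.  If $X$ were an annulus with core curve $\gamma$, then the annular projection distance $d_X(D, E)$ between two essential disks measures, coarsely, the relative twisting of $\bdy D$ and $\bdy E$ around $\gamma$.  A short argument of compression type (as in \cite[Theorem~10.1]{ms}) shows that highly twisted disk boundaries would force an actual compression of $\gamma$, contradicting the non-triviality of $M$ or bounding the twisting uniformly.  The resulting universal bound on $\diam_X(\pi_X(\calD(M, S)))$ is comfortably less than~$60$, so annular witnesses cannot be large.

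For the compressible case, since $X$ compresses in $M$ there is an essential disk $D_0$ with $\bdy D_0 \subset X$.  By largeness we can find a second disk $E_0$ with $\pi_X(E_0)$ far from $\pi_X(D_0)$.  A sequence of band-sum and handle-slide moves through disjoint disks, of the type used in~\cite[Theorem~11.10]{ms}, converts $E_0$ into a disk $E$ whose boundary also lies in $X$ and whose projection remains far from $\pi_X(D_0)$.  If $\bdy D_0 \cup \bdy E$ did not fill $X$, then a simple closed curve in its complement would coarsely bound $\pi_X(D)$ uniformly for every disk $D$ disjoint from it, and an elementary iteration would propagate this bound to all of $\calD(M,S)$, contradicting largeness.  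Hence some such pair fills $X$.

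The incompressible case is the heart of the theorem and I expect it to be the main obstacle.  Every disk $D$ must cut $X$, but no disk has its boundary trapped inside $X$; the arcs of $\bdy D \cap X$ therefore pair up components of $\bdy_S X$ in a way that persists as $D$ varies over $\calD(M, S)$.  Analysing this pairing combinatorially, one extracts a genuine fibred regular neighbourhood of $X$: a compact surface $F$, possibly non-orientable, and an orientation $I$--bundle $\rho_F \from T_F \to F$ embedded in $(M, S)$ with $X$ a component of $\bdy_h T_F$.  Incompressibility of $X$ together with the spotlessness of $M$ ensure that $T_F$ embeds without hidden compressions, and that at least one component of $\bdy_v T_F$ is isotopic into $S$ (otherwise disks could be isotoped off $X$, contradicting witness-ness).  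Finally, each disk $D$ projects via $\rho_F$ to an essential arc or curve in $F$, and $\pi_X$ factors coarsely through $\calC(F)$; largeness of $\pi_X(\calD(M,S))$ in $\calC(X)$ then forces $\calC(F)$ to have infinite diameter, so $F$ admits a pseudo-Anosov.  Setting up the arc-pairing analysis and the coarse factorisation cleanly is the bulk of~\cite[Theorem~12.1]{ms}, and will be the main technical load.
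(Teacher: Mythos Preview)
Your outline is reasonable, but note that the paper gives \emph{no} proof of this theorem: the statement carries a \qed\ at its end because it is simply recorded from~\cite[Theorems~10.1, 11.10, 12.1]{ms}.  Your sketch correctly identifies those three theorems as the sources of the three bullets and tracks the shape of the arguments in~\cite{ms}; in that sense you and the paper ``agree'', except that the paper defers the entire content to the citation while you attempt to unpack it.

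One small caution on your annulus paragraph: the actual mechanism in~\cite[Theorem~10.1]{ms} is not quite that high twisting forces a compression of the core $\gamma$ itself.  Rather, one surgers a given disk along outermost bigons with a fixed disk to produce a new essential disk whose twisting about $\gamma$ is uniformly bounded, and this bounds the projection diameter.  Your phrasing ``force an actual compression of $\gamma$'' is slightly off target, though the conclusion (a uniform bound well below $60$) is right.  Otherwise your plan for the compressible and incompressible cases matches the structure of~\cite{ms} closely enough that, were a proof required here, fleshing out your sketch along those lines would be the natural route.
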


\begin{remark}
\label{Rem:Paring}
Suppose that $X$ is a large incompressible witness, as in the third case.
Let $F$ be the base of the associated $I$--bundle $T_F$.  
Let $P_F$ be the collection of annuli, embedded in $S$, which are isotopic, rel boundary, to components of $\bdy_v T_F$.
We call these annuli the \emph{paring locus} for $T_F$.
The paring locus $P_F$ is non-empty (\refthm{DiskWitness}) and is disk busting~\cite[Remark~12.17]{ms}.
We call an essential disk $(D, \bdy D) \subset (M, S)$ \emph{vertical} for $T_F$ exactly when $D \cap T_F$ is vertical in $T_F$.
Such disks exist by \refthm{DiskWitness}.
If $D$ is vertical for $T_F$ then $D$ meets the paring locus $P_F$ in exactly two essential arcs.
Finally, the union $P_F \cup \bdy_h T_F$ is a compressible witness for $\calD(M, S)$. 
\end{remark}

\begin{definition}
\label{Def:Twins}
If $T_F$ is a product then $\bdy_h T_F = X \sqcup X'$ where $X' \subset S$ is again a large incompressible witness.  
In this case we call $X$ and $X'$ \emph{twins}.
\end{definition}

Similar to the case of the arc complex (\refcor{ArcWitnessesOverlap}), all large witnesses for the disk complex \emph{interfere}, as follows.

\begin{corollary}
\label{Cor:DiskWitnessesInterfere}
Suppose that $X$ and $Y$ are disjoint large witnesses with $\xi'(X) = \xi'(Y)$.  
Then either 
\begin{enumerate}
\item
$X$ and $Y$ are twins or
\item
$X$ and $Y$ have twins, $X'$ and $Y'$ respectively, so that 
$X'$ overlaps with $Y$ and $Y'$ overlaps with $X$. 
\end{enumerate}
\end{corollary}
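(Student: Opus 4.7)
The plan is to split into two cases depending on whether $X$ and $Y$ arise from the same or from different $I$-bundles inside $M$. In the former case they must be twins; in the latter both $I$-bundles must be products and their twins must overlap as required.

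First I would show that neither witness can be compressible. Suppose, for contradiction, that $X$ is a compressible large witness. By the second bullet of \refthm{DiskWitness}, there exist essential disks $D, E \subset (M, S)$ whose boundaries lie in (and fill) $X$. Since $Y$ is disjoint from $X$, the curve $\bdy D$ misses $Y$, so $D$ does not cut $Y$, contradicting the fact that $Y$ is a witness for $\calD(M, S)$. Thus both $X$ and $Y$ are incompressible large witnesses. Applying the third bullet of \refthm{DiskWitness}, we obtain orientation $I$-bundles $\rho_F \from T_F \to F$ and $\rho_G \from T_G \to G$ inside $(M, S)$, with $X$ a component of $\bdy_h T_F$, $Y$ a component of $\bdy_h T_G$, and with non-empty, disk-busting paring loci $P_F$ and $P_G$ (cf. \refrem{Paring}).

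If $T_F$ and $T_G$ are properly isotopic, then $X$ and $Y$ are both components of $\bdy_h T_F$. The hypothesis $\xi'(X) = \xi'(Y)$ together with \refrem{Nested} prevents them from being isotopic, so $\bdy_h T_F$ is disconnected. A twisted $I$-bundle has connected horizontal boundary, so $T_F$ must be a product, and $X$, $Y$ are precisely its two horizontal components, i.e., twins; this is Case (1). Otherwise $T_F$ and $T_G$ are not isotopic, and I would show that both must be products, producing twins $X'$ of $X$ and $Y'$ of $Y$ with $X'$ overlapping $Y$ and $Y'$ overlapping $X$. The key observation is that every vertical disk $D$ for $T_F$ must cut $Y$; since $\bdy D \subset \bdy_h T_F \cup P_F$ and $Y$ is disjoint from $X$, the boundary $\bdy D$ must meet $Y$ inside $(\bdy_h T_F \setminus X) \cup P_F$. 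Varying $D$ over vertical disks coming from the rich family of arcs in $F$ moved around by the pseudo-Anosov guaranteed by \refthm{DiskWitness} rules out the possibility that $\bdy_h T_F = X$, forcing $T_F$ to be a product with twin $X'$ of $X$, and shows that $Y$ meets $X'$ essentially. Since $\xi'(X') = \xi'(Y)$, \refrem{Nested} then forces $X' = Y$ (which returns us to Case (1)) or $X'$ and $Y$ to overlap (Case (2)). The symmetric argument applied to $T_G$ produces the twin $Y'$ of $Y$ overlapping $X$.

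The main obstacle is the vertical-disk argument in the last step: making precise how the pseudo-Anosov dynamics on the base $F$, combined with the incompressibility of $T_F$ properly embedded in the compression body $M$, rule out the twisted case and force the twin of $X$ to meet $Y$ essentially rather than to be absorbed into the paring locus. A conceptually cleaner route might be to translate the problem, via the correspondence between $\calD(M, S)$ and arc graphs of bases of $I$-bundles developed in \cite{ms}, and reduce the statement to its arc-graph analog, \refcor{ArcWitnessesOverlap}, thereby inheriting the cleaner overlap conclusion available in the arc setting.
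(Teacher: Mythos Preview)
The paper's proof is essentially a one-line citation: assuming $X$ and $Y$ are not twins, \cite[Lemma~12.21]{ms} directly supplies a twin $X'$ of $X$ with $X'$ intersecting $Y$; equality of $\xi'$ together with \refrem{Nested} then upgrades intersection to overlap, and the statement for $Y'$ follows by symmetry. Your opening reduction to the case where both $X$ and $Y$ are incompressible is correct and is indeed the first step inside that cited lemma, but from that point on you are attempting to reprove \cite[Lemma~12.21]{ms} from scratch rather than invoking it.

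The obstacle you flag is a genuine gap, and the sketch as written does not close it. In the twisted case $\bdy_h T_F = X$, every vertical disk $D$ has $\bdy D \subset X \cup P_F$, so the witness property for $Y$ (which is disjoint from $X$) only tells you that $Y$ meets $P_F$. But $\bdy D \cap P_F$ consists of exactly two essential arcs crossing the paring annuli (\refrem{Paring}), and applying a pseudo-Anosov of $F$ moves the horizontal part of $\bdy D$ around inside $X$ while leaving the isotopy class of $\bdy D \cap P_F$ unchanged. Hence ``varying $D$'' produces no new constraint on $Y \cap P_F$ and does not exclude the twisted case; the same issue arises in the product case if one tries to rule out the possibility that $X'$ misses $Y$ while $Y$ meets only $P_F$. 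The proof of \cite[Lemma~12.21]{ms} uses further structure---the interaction of the two $I$--bundles and the largeness threshold of $60$---that your sketch does not access. Your proposed alternative of reducing to the arc-graph analog is closer in spirit to what is actually done in \cite[Section~12]{ms}, but carrying it out would amount to reproducing a substantial part of that section; the cleanest route is simply to cite the lemma, as the paper does.
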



\begin{proof}
Suppose that $X$ and $Y$ are not twins. 
Thus we may apply~\cite[Lemma~12.21]{ms} to find that $X$ has a twin $X'$ and $X'$ intersects $Y$. 
Since $\xi'(X') = \xi'(Y)$, neither $X'$ nor $Y$ is nested in the other.
Thus $X'$ overlaps with $Y$.  
The remainder of the proof is similar. 
\end{proof}

As usual we take $\MCG(M, S)$ to be the mapping class group for the pair $(M, S)$.  
That is, the group of mapping classes of $M$ that preserve $S$ setwise.  
We say that a pair of disks $D, E \in \calD(M, S)$ have the same \emph{topological type} (or more simply, the same \emph{type}) if there is a mapping class $f \in \MCG(M, S)$ so that $f(D) = E$.   

\begin{lemma}
\label{Lem:DiskDiamBound}
The quotient $\calD(M, S) / \MCG(M, S)$ has diameter at most two. 
\end{lemma}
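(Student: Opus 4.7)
The plan is to parallel the proof of \reflem{ArcDiamBound}: identify a single topological type $[\gamma]$ of essential disk with two properties: (a) every $D \in \calD(M,S)$ is disjoint from some representative of $[\gamma]$, and (b) all representatives of $[\gamma]$ lie in a single $\MCG(M,S)$-orbit. Given both, for any disks $D, E$ we pick $\gamma_D, \gamma_E$ of type $[\gamma]$ disjoint from $D, E$ respectively, apply a mapping class taking $\gamma_D$ to $\gamma_E$, and the triangle inequality in the quotient yields $d([D],[E]) \leq d([D],[\gamma]) + d([\gamma],[E]) \leq 2$.

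Property (b) will be deduced from the classification of compression bodies (\refthm{ClassificationOfCompBodies}). Two essential disks $D_1, D_2$ in $(M,S)$ lie in the same $\MCG(M,S)$-orbit precisely when the cut compression bodies (with their distinguished disk scars) are homeomorphic as pairs, and by the classification this is controlled by the topological types of the upper and lower boundaries of the cut pieces. Thus matching this boundary data will verify transitivity within the chosen class.

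The choice of $[\gamma]$ splits into cases according to the structure of $M$. If $M$ is a solid torus, $\calD(M,S)$ is a single vertex and there is nothing to prove. If $M$ is a handlebody of genus $g \geq 2$, take $[\gamma]$ to be the class of non-separating disks: all such disks cut $M$ to a handlebody of genus $g-1$ with two disk scars, and so lie in a single orbit by \refthm{ClassificationOfCompBodies}; and every disk $D$ is disjoint from some non-separating disk, since if $D$ is separating it splits $M$ into two handlebodies of positive genus, each of which contains a non-separating disk that remains non-separating in $M$. If $\bdy^-\! M$ is non-empty and $M$ admits some non-separating disk, the same choice works verbatim. The remaining case is when $\bdy^-\! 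M$ is non-empty but every essential disk in $(M,S)$ is separating; here the constraints on the handle structure of $M$ are strong, and one directly enumerates the possible topological types of separating disks via \refthm{ClassificationOfCompBodies} to find a common type to which every other is disjoint.

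The main obstacle is the last sub-case, when every essential disk separates $M$. One must carefully use the classification to understand which components of $\bdy^-\! M$ can be ``separated off'' by a compressing disk, and identify a canonical such type of disk that is disjoint (up to the $\MCG(M,S)$-action) from every other disk. The handle-slide relations between separating disks, combined with matching boundary data of the two resulting compression body pieces under \refthm{ClassificationOfCompBodies}, will be the technical heart of this final case.
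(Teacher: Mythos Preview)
Your proposal is correct and follows essentially the same approach as the paper: split according to whether $(M,S)$ admits a non-separating disk, use the non-separating class as the pivot type when it exists, and appeal to \refthm{ClassificationOfCompBodies} for transitivity. The paper's resolution of your ``main obstacle'' (the all-separating case) is simpler than you anticipate: fix any component $F$ of $\bdy^-\! M$ and take $[\gamma]$ to be the class of disks cutting a copy of $F \times I$ off of $M$; any disk $E$ separates $M$ into two compression bodies, the one containing $F$ in its lower boundary contains such a $D'$ (or $E$ is already of this type), and the classification then shows all such $D'$ lie in one orbit --- no handle-slide analysis is needed.
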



\begin{proof}
There are two cases.  
Suppose first that $\calD(M, S)$ contains a non-separating disk: 
that is, a disk $(D, \bdy D) \subset (M, S)$ so that $M - D$ is connected. 
By \refthm{ClassificationOfCompBodies} (the classification of compression bodies) all non-separating disks have the same topological type. 
Suppose that $E \in \calD(M, S)$ is a separating disk.  
The classification of compression bodies implies that $E$ is disjoint from some non-separating disk $D'$.
This gives the desired diameter bound. 

Suppose instead that all essential disks in $(M, S)$ are separating.
Thus the lower boundary of $M$ is non-empty.
Suppose that $F$ is a component of the lower boundary of $M$.
Let $D$ be a separating disk which cuts a copy of $F \cross I$ off of $M$.
Now suppose that $E$ is any separating disk.
The classification of compression bodies implies that $E$ is disjoint from some disk $D'$ which is a homeomorphic image of $D$.
\end{proof}

\subsection{Families of witnesses}

Fix a modified complexity $c \leq \xi'(S)$.
The collection
\[
\calF_c = \{ X \subset S \st \mbox{$X$ is a large witness for $\calD(M, S)$ and $\xi'(X) = c$} \}
\]
is called a \emph{complete family}.
We now define a \emph{reduced family} as follows.
Suppose that $X$ and $X'$ in $\calF_c$ are twins.
Thus there is an $I$--bundle $T = F_T \cross I$ where $\bdy_h T = X \sqcup X'$.
We remove both $X$ and $X'$ from $\calF_c$ and replace them by the base surface $F_T$.
We abuse notation and again use $\calF_c$ to denote the reduced family.
When $A = F_T \in \calF_c$ is the base surface associated to $T$ we abuse notation and define $\bdy_S A = \bdy_S \bdy_h T$.


\begin{lemma}
\label{Lem:CurvesCut}
Fix $(M, S)$ and $c$.  
Suppose that $A, B \in \calF_c$ are distinct.
Suppose that $B$ is a base surface replacing the twinned surfaces $Y$ and $Y'$.
Then $\bdy_S A$ cuts both $Y$ and $Y'$. 
\end{lemma}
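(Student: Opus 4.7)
The plan is case analysis on $A$, using \refcor{DiskWitnessesInterfere} as the main tool. The reduced family $\calF_c$ contains two kinds of elements: ``bare'' large witnesses and base surfaces replacing twin pairs. I first record that $A \neq Y$ and $A \neq Y'$, since the pair $\{Y, Y'\}$ was removed from $\calF_c$ when passing to the reduced family. Moreover, if $A$ is itself a large witness (rather than a base surface), then $A$ has no twin: otherwise $A$ and its twin would also have been removed in the reduction.

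In the first case, I apply \refcor{DiskWitnessesInterfere} to the pair $A, Y$ of distinct large witnesses of equal modified complexity $c$. If $A$ meets $Y$, then \refrem{Nested} rules out nesting (they have the same modified complexity but are not isotopic), so they must overlap, and therefore $\bdy_S A$ cuts $Y$. If $A$ and $Y$ are disjoint, the corollary offers two alternatives: either they are twins, or both have twins. Both alternatives are incompatible with $A$'s having no twin, so this subcase cannot occur. Applying the same argument to $A$ and $Y'$ shows that $\bdy_S A$ also cuts $Y'$.

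In the second case, write $A = F_{T'}$ with $\bdy_h T' = X \sqcup X'$, so that $\bdy_S A = \bdy_S X \cup \bdy_S X'$. I apply \refcor{DiskWitnessesInterfere} to $X$ and $Y$. If they meet, \refrem{Nested} again forces them to overlap, so $\bdy_S X$ cuts $Y$ and hence $\bdy_S A$ cuts $Y$. If $X$ and $Y$ are disjoint, the corollary yields two alternatives: either $X$ and $Y$ are twins, in which case $Y$ must equal $X'$ by uniqueness of the twinning, forcing $\{X, X'\} = \{Y, Y'\}$ and hence $A = B$, a contradiction; or $X'$ overlaps $Y$, so $\bdy_S X'$ cuts $Y$ and again $\bdy_S A$ cuts $Y$. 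The argument for $Y'$ is entirely symmetric.

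The main obstacle will be cleanly excluding the ``twin'' alternatives offered by \refcor{DiskWitnessesInterfere}: in the first case via the structural observation that bare elements of the reduced family carry no twin, and in the second case by invoking uniqueness of the twinning to collapse the alternative ``$X$ and $Y$ are twins'' into the forbidden equality $A = B$. Once these alternatives are eliminated, the desired overlap, and hence the fact that $\bdy_S A$ cuts both $Y$ and $Y'$, falls out immediately from the corollary.
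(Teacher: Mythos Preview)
Your argument is correct and is, for the base-surface case, essentially the same as the paper's product subcase: both of you apply \refcor{DiskWitnessesInterfere} to the twin $X$ over $A$ against $Y$ (and then against $Y'$), ruling out the ``twins'' alternative via $A \neq B$ and uniqueness of the twinning.

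Where you genuinely diverge from the paper is in your Case~1, when $A$ is a bare large witness (compressible, or incompressible with twisted $I$--bundle). The paper does \emph{not} invoke \refcor{DiskWitnessesInterfere} here; instead it argues directly with the paring locus $P_B$ of \refrem{Paring}: since $P_B$ is disk-busting, it must cut $A$ (in the compressible case) or $A \cup P_A$ (in the twisted case), and one then chases where $P_B$ sits to force $Y$ and $Y'$ to overlap $A$. Your route is more uniform---one tool throughout---and exploits the clean structural fact that bare elements of the reduced family carry no twin, so both alternatives of the corollary are vacuously excluded. The paper's route is more hands-on and avoids any appeal to uniqueness of the $I$--bundle in the compressible and twisted subcases, at the cost of a separate geometric argument for each. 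Both arguments ultimately rest on the same implicit uniqueness of the $I$--bundle (from the theory in~\cite{ms}) once one reaches the product case, so your explicit invocation of it is consistent with the paper's ambient assumptions.
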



\begin{proof}
Let $T_B$ be the product $I$--bundle associated to $B$.
Let $P_B$ be the paring locus of $T_B$.
Recall that $P_B$ is disk busting.

Suppose that $A$ is a compressible witness.
Thus $P_B$ cuts $A$.
Suppose that $\bdy_S A$ does not cut $Y$.  
Thus $Y$ is (after an isotopy) contained in $A$. 
From \refrem{Nested} we deduce that $Y$ is isotopic to $A$. 
Thus $P_B$ does not cut $A$, a contradiction. 
Thus $\bdy_S A$ cuts $Y$; a similar argument proves that $\bdy_S A$ cuts $Y'$. 

Suppose that $A$ is an incompressible witness, with $I$--bundle $T_A$. 
Let $P_A$ be the paring locus of $T_A$.  
There are two subcases as $T_A$ is twisted or a product. 

Suppose that $T_A$ is twisted.  
Thus $A \cup P_A$ is a compressible witness (\refrem{Paring}). 
Again, since $P_B$ is disk busting, it cuts $A \cup P_A$.
If $P_B$ cuts $\bdy_S A$ we are done because $P_B$ is parallel into both $Y$ and $Y'$. 
If not, then $P_B$ is (after an isotopy) contained in $A$ or contained in $P_A$. 
In either case $Y$ and $Y'$ must cut $A$. 
Since $\xi'(Y) = \xi'(Y') = \xi'(A)$ we cannot have $Y$ or $Y'$ contained in $A$ (\refrem{Nested}). 
Thus $A$ overlaps \emph{both} $Y$ and $Y'$, and we are done. 

Suppose that $T_A$ is a product.  
Let $X$ and $X'$ be the twin components of $\bdy_h T_A$. 
Appealing to \refcor{DiskWitnessesInterfere} we may assume that $X$ overlaps with $Y$.  If $X$ overlaps with $Y'$ then we are done. 
If it does not, then, by \refcor{DiskWitnessesInterfere}, we deduce that $X'$ overlaps $Y'$.  Thus in either case, we are done. 
\end{proof}

\begin{definition}
\label{Def:WitnessProjection}
Fix a modified complexity $c$.
Suppose that $A, B \in \calF_c$.
We now define $\pi_B(A)$. 
(Note that we are overloading the notation $\pi_B$.
When the argument is a collection of curves we use \refdef{SubSurfaceProjection}.
When the argument is a witness we use \refdef{WitnessProjection}.)
There are two cases.
\begin{enumerate}
\item
Suppose that $B$ is not a base surface.
Then we define $\pi_B(A) = \pi_B(\bdy_S A)$.
\item
Suppose that $B$ is a base surface. 
Suppose that $\rho_B \from T_B \to B$ is the $I$--bundle associated to $B$.
Then we isotope $\bdy_S A$ to meet $\bdy_h T_B$ minimally and we define $\pi_B(A) = \pi_B(\rho_B(T_B \cap \bdy_S A))$.
\qedhere
\end{enumerate}
\end{definition}

\noindent
In the above definition, we are considering $\rho_B(T_B \cap \bdy_S A)$ as a set of arcs and curves in $B$.  
We surger them one at a time to obtain a set of curves in $B$.
Also, in both parts of the definition, \refcor{DiskWitnessesInterfere} implies that $\pi_B(A)$ is non-empty.

Suppose that $A, B, C \in \calF_c$.
Suppose further that $B$ is distinct from both $A$ and $C$.
Then we define 
\[
d_B(A,C) = d_{\calC(B)}(\pi_B(A), \pi_B(C))
\]
We will now abuse notation: 
the reduced family $\calF_c$ contains surfaces $A$ which index metric spaces, namely the curve graphs $\calC(A)$, instead of being metric spaces themselves.  

We now verify the three BBF axioms, as stated in \refsec{BBF}.  
Recall that $(M, S)$ is a spotless compression body, together with its upper boundary.  
Also, $c \in \ZZ$ is an integer.
Here is the proof of \refax{BoundedProjections}.

\begin{lemma}
\label{Lem:DiskAxBoundedProjections}
There is a constant $k > 0$ so that for every $(M, S)$, for every $c$, and for every $A, B \in \calF_c$, we have that $\diam_B(\pi_B(A)) \leq k$. 
\end{lemma}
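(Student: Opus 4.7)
The plan is a case analysis on whether $A$ and $B$ are base surfaces in the reduced family $\calF_c$. Two standing observations will streamline the argument. First, since $\xi'(A) = \xi'(B) = c$ and $A \ne B$, \refrem{Nested} forbids either from being properly nested in the other, and the same holds for the horizontal boundary components of any associated $I$-bundles. Second, a non-base-surface member of $\calF_c$ has no twin in the unreduced family, since otherwise it would have been replaced by a base surface in the reduction. In each case the bound on $\diam_B(\pi_B(A))$ will reduce to one or two applications of \reflem{Overlap}, and $k$ will be the maximum of the four resulting bounds.

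Case (i), where neither $A$ nor $B$ is a base surface: if $A$ and $B$ were disjoint, then \refcor{DiskWitnessesInterfere} would force either that they are twins of each other or that each has a twin overlapping the other, and both possibilities contradict the second observation. Hence $A$ and $B$ meet and, being unable to nest, must overlap, so \reflem{Overlap} gives $\diam_B(\pi_B(A)) \le 2$. Case (ii), where $A$ is a base surface replacing twins $X, X'$ but $B$ is not: applying \reflem{CurvesCut} with the roles of $A$ and $B$ exchanged, $\bdy_S B$ cuts each of $X$ and $X'$, so $B$ overlaps each. Therefore $\bdy_S A = \bdy_S X \sqcup \bdy_S X'$ cuts $B$ and $\pi_B(A) = \pi_B(\bdy_S A) = \pi_B(\bdy_S X) \cup \pi_B(\bdy_S X')$; each piece has diameter at most $2$ by \reflem{Overlap}, and since $\bdy_S X$ and $\bdy_S X'$ are disjoint in $S$, their $\pi_B$-images lie within bounded distance in $\calC(B)$, giving a universal constant bound.

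Case (iii), where $B$ is a base surface with product $I$-bundle $T_B = B \times I$ and horizontal boundary $Y \sqcup Y'$ but $A$ is not: by \reflem{CurvesCut}, $\bdy_S A$ cuts each of $Y$ and $Y'$, so $A$ overlaps each, and \reflem{Overlap} bounds the diameters of $\pi_Y(\bdy_S A)$ and $\pi_{Y'}(\bdy_S A)$ by $2$. Because $\rho_B$ restricts to a homeomorphism on each of $Y$ and $Y'$, surgering the components of $\rho_B(T_B \cap \bdy_S A)$ one at a time identifies $\pi_B(A)$ with the union $\rho_B(\pi_Y(\bdy_S A)) \cup \rho_B(\pi_{Y'}(\bdy_S A))$ in $\calC(B)$, each piece of diameter $\le 2$. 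Case (iv), with both $A$ and $B$ base surfaces, follows by combining (ii) and (iii): decompose $\bdy_S A = \bdy_S X \sqcup \bdy_S X'$ and apply the argument of (iii) to each piece through the bundle $T_B$.

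The main obstacle is bounding the distance between the two transported pieces in $\calC(B)$ within case (iii), since the trivial double cover $\rho_B|_{Y \sqcup Y'}$ can carry disjoint arcs in $Y \sqcup Y'$ to crossing arcs in $B$, ruling out any naive disjointness argument. The resolution should exploit the rigid $I$-bundle structure --- in particular the involution $\tau$ conjugating $\rho_B|_Y$ and $\rho_B|_{Y'}$, together with the fact that the components of $\bdy_S A \cap Y$ and $\bdy_S A \cap Y'$ originate from a single disjoint multicurve in $S$ separated only by the paring locus. Once this distance is bounded by an absolute constant, the maximum over the four cases yields the universal $k$ of the statement.
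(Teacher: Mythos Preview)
Your cases (i) and (ii) are fine and match the paper's first case (the paper does not split on whether $A$ is a base surface, since $\bdy_S A$ is a disjoint multicurve either way and \cite[Lemma~2.3]{mm2} applies directly). The real issue is exactly the one you flag: in case (iii) you need to bound the distance in $\calC(B)$ between $\rho_B(\pi_Y(\bdy_S A))$ and $\rho_B(\pi_{Y'}(\bdy_S A))$, and the sketch you offer does not close this gap. The involution $\tau$ identifies $\rho_B|_Y$ with $\rho_B|_{Y'}\circ\tau$, so comparing the two pieces in $B$ amounts to comparing $\tau(\bdy_S A \cap Y)$ with $\bdy_S A \cap Y'$ inside $Y'$; but these two arc systems are in general unrelated and can intersect arbitrarily. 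The fact that both come from the single multicurve $\bdy_S A$ only says they are joined through the paring locus $P_B$, and $\bdy_S A$ may cross $P_B$ as many times as it likes, so no uniform bound falls out.

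The paper resolves this with a genuinely different idea: it introduces an auxiliary essential disk $D$ adapted to $A$ (a compressing disk if $A$ is compressible, a vertical disk for $T_A$ otherwise, supplied by \refthm{DiskWitness} and \refrem{Paring}), chosen so that $\bdy D$ meets $\bdy_S A$ in at most four points. The crucial external input is \cite[Lemma~12.20]{ms}, which bounds $\diam_B\big(\pi_B(\rho_B(T_B\cap \bdy D))\big)$ uniformly for any essential disk $D$; this is a statement specifically about disks in the compression body and has no analogue for arbitrary multicurves like $\bdy_S A$. Since $\bdy D$ and $\bdy_S A$ have bounded intersection, their images under $\pi_B\circ\rho_B$ are uniformly close, and the triangle inequality finishes. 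In short, the missing ingredient is not more bookkeeping with $\tau$ but the passage through a disk and the invocation of \cite[Lemma~12.20]{ms}.
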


\begin{proof}
Suppose that $B$ is not a base surface.  
Note that $\bdy_S A$ is a disjoint collection of curves.  
By \refcor{DiskWitnessesInterfere}, we have $\bdy_S A$ cuts $B$.
By~\cite[Lemma~2.3]{mm2} the diameter of $\pi_B(A)$ in $\calC(B)$ is at most two. 

Suppose that $B$ is a base surface.  
Let $D$ be either a compressing disk for $A$ or a vertical disk for $T_A$, as provided by \refthm{DiskWitness} or \refrem{Paring}, respectively. 
If $D$ is a compressing disk then $\bdy D$ is disjoint from $\bdy_S A$. 
If $D$ is a vertical disk then $\bdy D$ meets $\bdy_S A = \bdy_S \bdy_h T_A$ in exactly four points. 
We now isotope $\bdy D$ to have minimal intersection with $\bdy_S B$.
Applying \cite[Lemma~12.20]{ms} we deduce that $\pi_B(\rho_B(T_B \cap \bdy D))$ has bounded diameter.  
Thus by the triangle inequality $\pi_B(\rho_B(T_B \cap \bdy_S A))$ also has bounded diameter, and we are done.
\end{proof}

We adopt the notation $d_Y(A, C) = d_Y(\bdy_S A, \bdy_S C)$. 

\begin{lemma}
\label{Lem:Swap}
Let $k$ be the constant of \reflem{DiskAxBoundedProjections}.  
Fix $(M, S)$ and $c$.  
Suppose that $A, B, C \in \calF_c$ where $B$ is a base surface replacing the twinned surfaces $Y, Y'$.  
Then 
\[
d_Y(A, C) \leq d_B(A, C) \leq d_Y(A, C) + 2k
\]
and the same holds for $Y'$.
\end{lemma}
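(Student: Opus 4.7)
The plan is to exploit the fact that, because $B$ is a base surface, Definition \ref{Def:Twins} says the associated $I$-bundle $T_B$ is a product with $\partial_h T_B = Y \sqcup Y'$. The bundle projection then restricts to homeomorphisms $\rho_B|_Y \colon Y \to B$ and $\rho_B|_{Y'} \colon Y' \to B$, each a homeomorphism of compact surfaces carrying $\partial Y$ (respectively $\partial Y'$) onto $\partial B$. Such a homeomorphism induces an isomorphism of the curve graphs $\calC(Y) \to \calC(B)$ that is natural with respect to subsurface projection. In particular, once $\partial_S A$ is isotoped to meet $\partial_h T_B$ minimally, $\pi_B(\rho_B(Y \cap \partial_S A))$ is the push-forward of $\pi_Y(A) = \pi_Y(\partial_S A)$, and the same holds for $C$ and for the $Y'$ side.

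For the lower bound, observe that by Definition \ref{Def:WitnessProjection},
\[
\pi_B(A) = \pi_B(\rho_B(Y \cap \partial_S A)) \cup \pi_B(\rho_B(Y' \cap \partial_S A)),
\]
and likewise for $C$, so that $\pi_B(\rho_B(Y \cap \partial_S A)) \subseteq \pi_B(A)$ and $\pi_B(\rho_B(Y \cap \partial_S C)) \subseteq \pi_B(C)$. Combining these containments with the identification of the previous paragraph yields
\[
d_Y(A, C) = \diam_B\bigl(\pi_B(\rho_B(Y \cap \partial_S A)) \cup \pi_B(\rho_B(Y \cap \partial_S C))\bigr) \leq d_B(A, C).
\]

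For the upper bound I invoke Lemma \ref{Lem:DiskAxBoundedProjections}: both $\pi_B(A)$ and $\pi_B(C)$ have $\calC(B)$-diameter at most $k$. Hence the non-empty subset $\pi_B(\rho_B(Y \cap \partial_S A))$ lies within Hausdorff distance $k$ of $\pi_B(A)$, and similarly for $C$. Swapping each set for its subset on both sides inflates the diameter by at most $2k$:
\[
d_B(A, C) \leq \diam_B\bigl(\pi_B(\rho_B(Y \cap \partial_S A)) \cup \pi_B(\rho_B(Y \cap \partial_S C))\bigr) + 2k = d_Y(A, C) + 2k.
\]
The argument for $Y'$ is identical, using $\rho_B|_{Y'}$.

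The main step requiring care is the naturality claim: $\pi_Y$ is defined by surgering arcs against $\partial Y$, whereas $\pi_B \circ \rho_B|_Y$ surgers against $\partial B$. Since $\rho_B|_Y$ is a homeomorphism of surfaces sending $\partial Y$ to $\partial B$, the two surgery operations do correspond, but this should be spelled out carefully. Once that naturality is in hand, the rest is set-theoretic bookkeeping with the containment $\pi_B(\rho_B(Y \cap \partial_S A)) \subseteq \pi_B(A)$ and the bounded-diameter lemma.
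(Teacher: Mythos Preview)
Your argument is correct and is essentially an expanded version of the paper's own proof, which reads in full: ``Applying \reflem{CurvesCut}, we have that $d_Y(A,C)$ is defined. The first inequality follows from the definitions. The second inequality follows from two applications of \reflem{DiskAxBoundedProjections} and the triangle inequality.'' You have unpacked exactly what ``follows from the definitions'' means (the containment $\pi_B(\rho_B(Y\cap\partial_S A))\subseteq\pi_B(A)$ together with the naturality of surgery under the homeomorphism $\rho_B|_Y$), and your upper bound is precisely the two applications of the bounded-projections lemma plus the triangle inequality.

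One small omission: you assert that $\pi_B(\rho_B(Y\cap\partial_S A))$ is non-empty without justification. This is where \reflem{CurvesCut} enters---it guarantees that $\partial_S A$ actually cuts $Y$ (and $Y'$), so that $\pi_Y(A)$, and hence its image in $\calC(B)$, is non-empty. You should cite it at the outset, as the paper does, to ensure $d_Y(A,C)$ is defined.
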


\begin{proof}
Applying \reflem{CurvesCut}, we have that $d_Y(A, C)$ is defined.  
The first inequality follows from the definitions. 
The second inequality follows from two applications of \reflem{DiskAxBoundedProjections} and the triangle inequality. 
\end{proof}

Here is the proof of \refax{Behrstock}.

\begin{lemma}
Let $k$ be the constant of \reflem{DiskAxBoundedProjections}.
Fix $(M, S)$ and $c$.
For every $A, B, C \in \calF_c$, we have that at most one of the following is greater than $12 + 2k$:
\[
d_A(B, C), \quad 
d_B(A, C), \quad 
d_C(A, B)
\]
\end{lemma}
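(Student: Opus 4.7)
The strategy is to reduce to the classical Behrstock inequality for subsurfaces of $S$, treating the presence of base surfaces in $\calF_c$ as a bounded perturbation controlled by \reflem{Swap}.

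First, for each $X \in \{A, B, C\}$ I choose a companion subsurface $X_0 \subset S$. If $X$ is not a base surface, set $X_0 = X$. If $X$ is a base surface replacing twins $Y, Y' \subset \bdy_h T_X$, set $X_0$ to be a suitably chosen one of the two twins. In all cases $A_0, B_0, C_0$ are essential subsurfaces of $S$ and are large witnesses for $\calD(M, S)$. Using \refcor{DiskWitnessesInterfere}, a case analysis on which of $A, B, C$ are base surfaces shows that we may make the twin choices so that any two of $A_0, B_0, C_0$ either coincide or overlap in $S$; this uses that \reflem{Swap} lets us replace a base surface by either of its twins with only a $2k$ penalty in the outer projection distance.

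Second, I compare each $d_X(Y, Z)$ to its classical analogue $d_{X_0}(\bdy_S Y_0, \bdy_S Z_0)$. There are two sources of slack. The outer projection: when $X$ is a base surface, \reflem{Swap} gives $|d_X(Y, Z) - d_{X_0}(\bdy_S Y, \bdy_S Z)| \leq 2k$, and when $X = X_0$ there is no error at all. The inner arguments: when $Y$ is a base surface, $\bdy_S Y$ is a disjoint union $\bdy_S Y_0 \sqcup \bdy_S Y_0'$, and projections of disjoint curves to $X_0$ differ by at most two by \cite[Lemma~2.3]{mm2}; the same applies to $Z$. Hence the total discrepancy between $d_X(Y, Z)$ and $d_{X_0}(\bdy_S Y_0, \bdy_S Z_0)$ is at most $2k$ plus a small universal constant.

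Finally, applying the classical Behrstock inequality (\cite[Theorem~4.3]{Behrstock06}, as cited in the proof of \reflem{ArcAxioms}) to $A_0, B_0, C_0 \subset S$ yields a universal constant $L$ so that at most one of the three classical projection distances exceeds $L$. Translating back through the comparison of the previous paragraph, at most one of $d_A(B, C), d_B(A, C), d_C(A, B)$ exceeds $L + 2k + O(1)$, and the bound $12 + 2k$ in the statement is arranged to absorb $L$ together with the universal inner-projection slack. The main obstacle is the first step: choosing the twins so that $A_0, B_0, C_0$ pairwise overlap in $S$ demands a careful enumeration of which of the three witnesses are base surfaces, with repeated use of \refcor{DiskWitnessesInterfere} and of the twin-interchangeability afforded by \reflem{Swap}; everything else is essentially classical Behrstock plus standard bounded-projection bookkeeping.
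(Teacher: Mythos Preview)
Your overall strategy---replace base surfaces by twins via \reflem{Swap} and then invoke the classical Behrstock inequality---is exactly the paper's.  The difference is organisational.  The paper argues asymmetrically: it assumes $d_B(A,C) > 12+2k$ and bounds $d_A(B,C)$, so it only ever needs to choose twins for the two ``outer'' witnesses $A$ and $B$, while $\bdy_S C$ is treated as a fixed multicurve throughout.  A single overlapping pair of twins (one over $A$, one over $B$) is then guaranteed by \refcor{DiskWitnessesInterfere}, and \reflem{CurvesCut} ensures that $\bdy_S C$ cuts everything in sight.

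Your symmetric formulation---fix once and for all a triple $A_0,B_0,C_0$ of pairwise-overlapping subsurfaces and reduce all three projection distances simultaneously---has a gap at step~1.  \refcor{DiskWitnessesInterfere} only controls pairs: for each pair of base surfaces it produces overlapping twins, but nothing forces a consistent global choice.  For instance, if $A,B,C$ are all base surfaces with twin pairs $(X_1,X_2)$, $(Y_1,Y_2)$, $(Z_1,Z_2)$, it is consistent with the corollary that $X_i$ overlaps $Y_j$ exactly when $i\neq j$, and likewise for the other two pairs; but then no transversal $(X_i,Y_j,Z_k)$ is pairwise overlapping, since that would require $i\neq j$, $j\neq k$, $k\neq i$ with $i,j,k\in\{1,2\}$.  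You have not ruled this configuration out, and your remark that \reflem{Swap} lets you swap the \emph{outer} twin at a $2k$ cost does not help here, because step~3 still needs a single fixed triple on which to run the classical inequality.  The cure is simply to drop the symmetric reduction and argue asymmetrically as the paper does; then only one overlapping pair of twins is needed at a time, and that is exactly what \refcor{DiskWitnessesInterfere} supplies.
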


\begin{proof}
It suffices to assume that $d_B(A, C) > 12 + 2k$ and bound $d_A(B, C)$ from above.

Suppose that neither $B$ nor $A$ is a base surface.  
Then we may apply the usual Behrstock inequality and deduce that $d_A(B, C) < 10$; see~\cite[Lemma 2.5]{Ma}.

Suppose instead $B$ is not a base surface, but $A$ is.
Let $X$ and $X'$ be the twins over $A$. 
By \refcor{DiskWitnessesInterfere}, both $X$ and $X'$ overlap $B$. 
Applying~\cite[Lemma~2.3]{mm2} we have $d_B(X, C) > 10 + 2k$. 
The usual Behrstock inequality gives $d_X(B, C) < 10$. 
Applying \reflem{Swap} we deduce that $d_A(B, C) < 10 + 2k$. 

Suppose instead that $B$ is a base surface but $A$ is not.
Let $Y$ and $Y'$ be the twins over $B$.  
By \reflem{Swap} we have that $d_Y(A, C) \geq 12$ and also $d_{Y'}(A, C) \geq 12$.
By \reflem{CurvesCut} both $\bdy_S B$ and $\bdy_S C$ cut $A$.
Suppose that $\bdy_S Y$ cuts $A$. 
The usual Behrstock inequality gives $d_A(Y, C) < 10$. 
We deduce that $d_A(B, C) < 12$, as desired.

Finally suppose that $B$ and $A$ are base surfaces. 
Let $Y$ and $Y'$, and $X$ and $X'$, be the twins over $B$ and $A$, respectively. 
By \refcor{DiskWitnessesInterfere}, we may assume that $X$ and $Y$ overlap. 
Thus $d_Y(X, C) > 10$, so $d_X(Y, C) < 10$, and we are done as above.
\end{proof}

Here is the proof of \refax{LargeLink}.

\begin{lemma}
Let $k$ be the constant of \reflem{DiskAxBoundedProjections}.  Fix $(M, S)$ and $c$.  For every $A, C \in \calF_c$, the following set is finite:
\[
\left\{ B \in \calF_c \st \mbox{$A \neq B$, $B \neq C$, and $d_B(A, C) > 7 + 2k$} \right\}
\]
\end{lemma}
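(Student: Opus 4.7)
The plan is to reduce the claim to the usual Large Link axiom for subsurface projections on $S$ (\cite[Lemma~6.2]{mm2}, or \cite[Lemma~5.3]{bbf}), applied to the finite collections of curves $\bdy_S A$ and $\bdy_S C$.

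First, I would split the set in question into two sub-collections according to whether $B$ is a base surface or not. For a non-base-surface $B \in \calF_c$, part~(1) of \refdef{WitnessProjection} gives $\pi_B(A) = \pi_B(\bdy_S A)$ and likewise for $C$, so $d_B(A, C)$ is literally a standard subsurface projection distance on $S$, computed between the multicurves $\bdy_S A$ and $\bdy_S C$.

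Next, for a base surface $B \in \calF_c$ that replaces twins $Y$ and $Y'$, I apply \reflem{Swap}: the hypothesis $d_B(A, C) > 7 + 2k$ yields $d_Y(A, C) > 7$. Note that $\bdy_S A$ and $\bdy_S C$ both cut $Y$ by \reflem{CurvesCut}, so $d_Y(\bdy_S A, \bdy_S C)$ is defined. Thus every $B$ in our set produces an essential subsurface $Z \subset S$ --- namely $Z = B$ itself, or $Z = Y$ a twin over $B$ --- with $d_Z(\bdy_S A, \bdy_S C) > 7$.

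Finally, I would apply the usual Large Link axiom on $S$ to the pair of multicurves $(\bdy_S A, \bdy_S C)$ (decomposing into finitely many component pairs if convenient) to conclude that only finitely many essential subsurfaces $Z \subset S$ satisfy $d_Z(\bdy_S A, \bdy_S C) > 7$. Since every twin pair $(Y, Y')$ arises from at most one base surface in $\calF_c$, the map $B \mapsto Z$ is at most two-to-one, so finiteness of the $Z$'s passes to finiteness of the $B$'s.

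The main obstacle is purely bookkeeping: ensuring that the overloaded projection $\pi_B$ of \refdef{WitnessProjection} is controlled by standard subsurface projections in $S$ at the twinned witnesses. This is exactly the content of \reflem{Swap}, which is what makes the reduction to the usual Large Link axiom go through.
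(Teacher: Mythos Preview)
Your proposal is correct and follows essentially the same approach as the paper: reduce to the standard Large Link axiom \cite[Lemma~5.3]{bbf} for subsurface projections in $S$, invoking \reflem{Swap} to handle the case where $B$ is a base surface. The only cosmetic difference is that the paper organizes its case split on whether $A$ and $C$ are base surfaces (replacing each by its pair of twins, at the cost of an extra additive constant of two per replacement), whereas you split on $B$ and work with the multicurves $\bdy_S A,\ \bdy_S C$ directly.
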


\begin{proof}
If $A$ and $C$ are not base surfaces, then this follows from~\cite[Lemma~5.3]{bbf} and \reflem{Swap}.
If $A$ is a base surface but $C$ is not then suppose that $X$ and $X'$ are the twins over $A$.  
We may repeat the previous argument for the pairs $(X, C)$ as well as $(X', C)$, paying at most an additional two~\cite[Lemma~2.3]{mm2} in each case.
When both $A$ and $C$ are base surfaces there are four such pairs and the cost is at most an additional four in each case.
\end{proof}

Since the axioms hold, as in \refsec{ArcFamilies} we may build the product of quasi-trees of spaces $\calP(M, S)$ for the disk graph.  
We obtain the following. 

\begin{corollary}
Suppose that $(M, S)$ is a nontrivial spotless compression body with $S = S_{g,b}$.  
Suppose that $\xi'(S) \geq 1$.  
Then 
\[
\pushQED{\qed}
\asydim \calP(M, S) \leq \frac{(4g+b) (4g+b-3)}{2} - 2 \qedhere
\popQED
\]
\end{corollary}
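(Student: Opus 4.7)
The plan is to mirror the arc-graph calculation at the end of \refsec{ArcFamilies}. The three preceding lemmas have verified \refax{BoundedProjections}, \refax{Behrstock}, and \refax{LargeLink} for each reduced family $\calF_c$, so the quasi-trees of graphs $\calC(\calF_c)$ are defined for $c = 1, 2, \ldots, \xi'(S)$, and $\calP(M, S)$ is their $\ell^1$ product. By \reffac{Product}, it suffices to bound $\asydim \calC(\calF_c)$ individually for each $c$ and sum.

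First, I would apply Bestvina-Bromberg uniformly. The key observation particular to the disk-graph setting is that every surface appearing in a reduced family $\calF_c$ is orientable. A large witness $X \subset S$ inherits orientability from $S$; a base surface $F$ enters the reduced family only in exchange for a pair of twins (\refdef{Twins}), so $T_F = F \cross I$ is a product $I$-bundle and $F$ is orientable as well. Hence $\asydim \calC(A) \leq \xi'(A) = c$ uniformly in $A \in \calF_c$, by~\cite[Corollary~1.1]{bb}, and \refthm{BBFBound} gives the generic bound $\asydim \calC(\calF_c) \leq c + 1$.

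Second, the bounds at the extremes are sharpened exactly as in the arc case. For $c = 1$ every curve graph in $\calF_1$ is a Farey graph, so \refthm{BBFBoundTree} yields $\asydim \calC(\calF_1) \leq 1$. For $c = \xi'(S) = 4g + b - 3$, \refrem{Nested} forces any witness $X \subset S$ with $\xi'(X) = c$ to be isotopic to $S$; in particular a twin pair is impossible, so no base surface lies in $\calF_c$, and $\calF_c \subseteq \{S\}$ gives $\asydim \calC(\calF_c) \leq 4g + b - 3$.

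Combining via \reffac{Product} yields
\[
\asydim \calP(M, S) \leq 1 + \sum_{c=2}^{\xi'(S) - 1}(c + 1) + (4g + b - 3),
\]
which is the same sum as in \refsec{ArcFamilies} and simplifies to $\frac{(4g+b)(4g+b-3)}{2} - 2$. The one place where this argument departs from its arc-graph counterpart is the orientability observation that licenses the uniform use of Bestvina-Bromberg on the base surfaces; once this is settled, the remainder is a direct application of the machinery already set up, so I do not anticipate any serious obstacle.
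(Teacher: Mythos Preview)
Your argument is correct and follows the same route as the paper, which simply says ``as in \refsec{ArcFamilies}'' and states the corollary without further detail. The orientability check for base surfaces is a point the paper leaves implicit; your justification (twins arise only from product $I$--bundles, and a product $I$--bundle in the orientable manifold $M$ has orientable base) is the right one and is needed to invoke~\cite[Corollary~1.1]{bb}.
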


\subsection{Embedding the disk complex}
\label{Sec:DiskEmbed}

In this section we prove the following. 

\begin{theorem}
\label{Thm:DiskEmbed}
There is a quasi-isometric embedding $\phi$ of the disk graph $\calD(M, S)$ into the product $\calP(M, S)$ of quasi-trees of curve graphs.  
Moreover, $\phi$ is equivariant with respect to the action of the mapping class group $\MCG(M, S)$. 
\end{theorem}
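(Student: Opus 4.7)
The plan is to mirror the proof of \refthm{ArcEmbed} from \refsec{ArcEmbed}, with extra care around the base-surface members of the reduced family $\calF_c$.  First I would choose $\Link$ large enough to cover the three BBF axioms verified in the preceding lemmas plus a numerical slack for a Lipschitz estimate, take $\Cutoff$ from~\cite[Theorem~A]{bbf}, take $\Close$ from \reffac{Close}, and equip the already-constructed product $\calP(M, S)$ of quasi-trees with the $\ell^1$ metric.

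Next I would define an equivariant coarse map $\phi_c \from \calD(M, S) \to \calC(\calF_c)$ by
\[
\phi_c(D) = \bigl\{ \pi_Y(E) \st d_\calD(D, E) \leq 2 \text{ and } Y \in \calF_c \text{ carries } E \bigr\},
\]
where ``$Y$ carries $E$'' means $\bdy E \subset Y$ when $Y$ is not a base surface, and means $E$ is a vertical disk for the associated $I$--bundle $T_Y$ (in the sense of \refrem{Paring}) when $Y$ is a base surface.  Non-emptiness follows from \reflem{DiskDiamBound}: any disk lies within $\calD$--distance two of a disk of any prescribed topological type, and each non-empty $\calF_c$ carries some such type.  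Equivariance is immediate from the definition.  I would then replay \reflem{ArcCoarseLip}: for two disks $E, E'$ carried by $Y, Y' \in \calF_c$, both at $\calD$--distance at most two from $D$, \reflem{DiskAxBoundedProjections} and the triangle inequality bound $d_Z(Y, Y')$ for every third $Z \in \calF_c$, and then \reffac{Close} bounds $d_{\calC(\calF_c)}(\pi_Y(E), \pi_{Y'}(E'))$ by $\Close + O(1)$.  Mirroring \reflem{ArcNoMove}, I would show that for each large witness $X \in \calF_c$ the projection distance $d_X(D, D')$ is coarsely equal to $d_X(\phi_c(D), \phi_c(D'))$.  Setting $\phi(D) = (\phi_c(D))_c$, the Lipschitz inequality gives the upper bound on $d_\calP(\phi(D), \phi(D'))$; the lower bound on $d_\calD(D, D')$ comes from the Masur--Schleimer distance formula for the disk graph (a combinatorial complex in the sense of~\cite[Section~5]{ms}, so that~\cite[Theorems~5.14 and~13.1]{ms} apply), combined with \refthm{BBFEstimate} applied to each $\calC(\calF_c)$, exactly as in \refsec{ArcEmbed}.

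The main obstacle is the bookkeeping around base-surface witnesses.  For a base surface $Y$ replacing twins $X$ and $X'$, the map $\pi_Y$ is defined in \refdef{WitnessProjection}(2) via the $I$--bundle projection $\rho_Y$ rather than by the classical formula, so one must verify that vertical disks are the correct analog of ``arcs carried by $Y$'' from the arc-graph argument, and that their projections enjoy the coarse relations required by the BBF machinery and by the Masur--Schleimer distance formula.  \reflem{Swap}, which translates $d_Y$ into $d_X$ (or $d_{X'}$) at an additive cost of $2k$, together with \refrem{Paring}, which guarantees a supply of vertical disks, is exactly what is needed for this translation.  With these in hand, the remaining inequalities in both the Lipschitz estimate and the ``no-move'' estimate lose only additional additive constants, and the arc-graph argument transfers with only cosmetic changes.
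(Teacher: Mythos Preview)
Your outline follows the paper's proof almost exactly: the paper says the argument is that of \refthm{ArcEmbed} with three changes (replace \reflem{ArcDiamBound} by \reflem{DiskDiamBound}, redefine \emph{carries}, and replace \refthm{ArcEstimate} by the disk-graph distance estimate~\cite[Theorem~19.9]{ms}), and you hit all three.

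There is, however, a genuine slip in your definition of ``carries''.  You split into two cases, $Y$ not a base surface (require $\bdy E \subset Y$) and $Y$ a base surface (require $E$ vertical in $T_Y$).  But the reduced family $\calF_c$ can also contain \emph{twisted} incompressible witnesses, which are not base surfaces and yet admit no compressing disks; for such $Y$ no essential disk has $\bdy E \subset Y$, so if some $\calF_c$ consists only of twisted witnesses your $\phi_c$ is empty-valued and fails to be a coarse map.  The paper's definition avoids this by declaring that $Y$ carries $E$ whenever $E$ is isotopic to a vertical disk in $T_Y$, for $Y$ twisted \emph{or} a base surface (\refrem{Paring} supplies such disks in both cases).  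A smaller point: the translation of the Masur--Schleimer distance formula from complete to reduced families needs $d_B(D,E)$, $d_Y(D,E)$, $d_{Y'}(D,E)$ to be coarsely equal for \emph{disks} $D,E$; this is~\cite[Theorem~12.20]{ms}, not \reflem{Swap}, which only compares projections of witness boundaries $\bdy_S A$, $\bdy_S C$.
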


We deduce from this, and from \reffac{Subspaces}, the following.

\begin{corollary}
\label{Cor:DiskBound}
Suppose that $(M, S)$ is a nontrivial spotless compression body with $S = S_{g,b}$.  
Suppose that $\xi'(S) \geq 1$.  
Then 
\[
\pushQED{\qed}
\asydim \calD(M, S) \leq \frac{(4g+b) (4g+b-3)}{2} - 2 \qedhere
\popQED
\]
\end{corollary}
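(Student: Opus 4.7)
The plan is to observe that the corollary is a direct chain of three already-established results: the quasi-isometric embedding of the disk graph into a product of quasi-trees of curve graphs, monotonicity of asymptotic dimension under coarse embeddings, and the explicit upper bound on the asymptotic dimension of that product. All the heavy lifting has already been done, so the proof should be short.

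First I would invoke \refthm{DiskEmbed}, which produces the $\MCG(M,S)$--equivariant quasi-isometric embedding
\[
\phi \from \calD(M, S) \longrightarrow \calP(M, S).
\]
A quasi-isometric embedding is by definition a coarse embedding (since the affine bounds on $d_{\calP}(\phi(x),\phi(y))$ in terms of $d_{\calA}(x,y)$ satisfy the growth requirements on $F$ and $G$ from the definition of coarse embedding in \refsec{Background}). Hence \reffac{Subspaces} applies and yields
\[
\asydim \calD(M, S) \;\leq\; \asydim \calP(M, S).
\]

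Second, I would appeal to the corollary immediately preceding \refthm{DiskEmbed}, which states
\[
\asydim \calP(M, S) \;\leq\; \frac{(4g+b)(4g+b-3)}{2} - 2.
\]
Chaining these two inequalities gives the claimed bound on $\asydim \calD(M, S)$.

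There is no genuine obstacle at this stage; the substantive work was accumulating the BBF axioms for each reduced family $\calF_c$ (Lemmas on bounded projections, the Behrstock inequality, and large links for the disk setting), verifying the product dimension bound via \reffac{Product} and Theorems~\ref{Thm:BBFBound} and~\ref{Thm:BBFBoundTree}, and constructing the equivariant quasi-isometric embedding $\phi$ in \refthm{DiskEmbed}. Once those are in hand, the corollary is immediate from the two-step deduction above.
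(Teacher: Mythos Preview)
Your proposal is correct and matches the paper's approach exactly: the paper deduces the corollary from \refthm{DiskEmbed} and \reffac{Subspaces}, together with the preceding bound on $\asydim \calP(M,S)$. (One typographical slip: in your parenthetical you wrote $d_{\calA}$ where you meant $d_{\calD}$.)
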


\begin{remark}
When $g > 1$ and $b = 0$, the upper bound is smaller by one.
See \cite[Corollary~1.1]{bb}.
\end{remark}

\begin{remark}
\label{Rem:Hamenstadt}
Hamenst\"adt~\cite[Theorem~3.6]{hii} previously showed, in the special case of a handlebody $H_g$, that $\asydim \calD(H_g, \bdy H_g) \leq (3g - 3)(6g - 2)$.
Her proof technique is quite different from ours. 
\end{remark}

The proof of \refthm{DiskEmbed} is the same as that of \refthm{ArcEmbed}, with three changes.
First, we replace the diameter bound (\reflem{ArcDiamBound}, for arcs) by \reflem{DiskDiamBound}.
Second, we replace the definition of \emph{carries} (\refdef{ArcCarries}, for arcs) with the following. 

\begin{definition}
Suppose that $D \in \calD(M, S)$ is a disk and $A \in \calF_c$.  
If $A \subset S$ is compressible (so not a base surface) then $A$ \emph{carries} $D$ exactly when $D$ is a compressing disk for $A$.  
If $A$ is a twisted witness, or a base surface, then $A$ \emph{carries} $D$ exactly when $D$ is isotopic to a vertical disk in $T_A$. 
\end{definition}

Third and lastly, we replace the distance estimate \refthm{ArcEstimate} with the following. 

\begin{theorem}
Suppose that $M$ and $S$ are as above.  
There is a constant $\Cutoff$ so that for any $\Cutoff' \geq \Cutoff$ there is a constant $\Coarse$ with the following property.
For any disks $D$ and $E$ we have
\[
d_\calD(D, E) =_\Coarse \sum [d_X(D, E)]_{\Cutoff'}
\]
where the sum is taken over all $X$ in all reduced families for $\calD(M, S)$. 
\end{theorem}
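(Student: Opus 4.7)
The plan is to deduce the statement from the general distance estimate for combinatorial complexes in \cite[Theorems~5.14 and~13.1]{ms}, and then to prune the resulting sum down to the reduced families. Since $\calD(M, S)$ is a combinatorial complex in the sense of \cite[Section~5]{ms}, those theorems provide a constant $\mathsf{L}$ so that for every $\mathsf{L}' \geq \mathsf{L}$ there is a constant $\Coarse_0$ with
\[
d_\calD(D, E) =_{\Coarse_0} \sum_{X} [d_X(D, E)]_{\mathsf{L}'},
\]
where the sum ranges over \emph{all} witnesses for $\calD(M, S)$. The remaining work is to convert this sum into one over the reduced families.

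The first pruning eliminates the non-large witnesses. By definition, if a witness $X$ does not lie in any complete family $\calF_c$ then $\diam_X(\pi_X(\calD(M, S))) \leq 60$, so $d_X(D, E) \leq 60$ for every pair of disks $D, E$. Taking $\Cutoff \geq \max(\mathsf{L}, 60)$ therefore kills every non-large term, and the sum reduces to one indexed by the \emph{complete} families $\bigcup_c \calF_c$, before any twin replacement.

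The second pruning collapses each twin pair $(X, X')$ with base surface $B$ into the single base term $d_B(D, E)$. Here one needs an analogue of \reflem{Swap} with disks in place of witnesses. Using the product $I$-bundle $T_B$, the fibre involution $\tau$ identifies $\calC(X)$ with $\calC(X')$, while the bundle projection $\rho_B$ relates both to $\calC(B)$ via \refdef{WitnessProjection}. Arguing as in \reflem{DiskAxBoundedProjections} (via \cite[Lemma~12.20]{ms} and the triangle inequality) then yields a uniform additive bound $|d_X(D, E) - d_B(D, E)| \leq 2k'$, and likewise for $X'$. Hence if $\Cutoff'$ exceeds $\Cutoff$ by at least $2k'$, the base term $[d_B(D, E)]_{\Cutoff'}$ and the pair of twin terms $[d_X(D, E)]_{\Cutoff} + [d_{X'}(D, E)]_{\Cutoff}$ each force the other to be non-trivial, and are comparable up to a factor of two. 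This completes the passage to the reduced families and thereby the theorem.

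The main obstacle will be carrying out the twin-collapse step with the correct book-keeping: one must juggle two thresholds (the one supplied by Masur-Schleimer and the final $\Cutoff'$), absorb the additive slack $2k'$ introduced by the twin-swap bound, and fold the factor-of-two loss from merging two twin terms into one base term into the multiplicative constant, all while still producing a single coarse equality of the form $=_\Coarse$ with a single threshold $\Cutoff'$ as in the statement.
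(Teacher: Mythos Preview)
Your proposal is correct and follows essentially the same route as the paper: start from the Masur--Schleimer distance estimate over all witnesses, drop the non-large ones by taking the threshold above $60$, and then use~\cite[Theorem~12.20]{ms} to show that $d_B(D,E)$, $d_Y(D,E)$, and $d_{Y'}(D,E)$ are coarsely equal, absorbing the resulting factor of two into $\Coarse$. The only correction is bibliographic: for the disk complex the relevant distance estimate in~\cite{ms} is Theorem~19.9, not Theorems~5.14 and~13.1 (which you have carried over from the arc-graph argument).
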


\begin{proof}
The distance estimate~\cite[Theorem~19.9]{ms} bounds $d_\calD(D, E)$ above and below using the sum of projection distances to all witnesses.
That is, the sum there is taken over all $X$ in all complete families for $\calD(M, S)$.
Suppose that $B$ is a base surface for the twins $Y$ and $Y'$.  
By~\cite[Theorem~12.20]{ms} we have that all three of 
\[
d_B(D, E), \quad 
d_Y(D, E), \quad \mbox{and} \quad
d_{Y'}(D, E)
\]
are coarsely equal.  
Here we define $d_B(D, E) = d_B(\pi_B(\bdy D), \pi_B(\bdy E))$ and also $\pi_B(\bdy D) = \pi_B(\rho_B(T_B \cap \bdy D))$ as in \refdef{WitnessProjection}.
This and~\cite[Theorem~19.9]{ms} gives the lower bound. 
The upper bound is proved in the same way, after weakening the constant $\Coarse$ by a factor of two.
\end{proof}


\begin{thebibliography}{Mang10}


\bibitem[Beh]{Behrstock06}
J.~Behrstock.
\newblock Asymptotic geometry of the mapping class group and Teichm\"uller space. 
\newblock \emph{Geom. Topol.}, 10 (2006), 1523--1578. 

\bibitem[BHS]{Sisto17}
J.~Behrstock, M.~Hagen, and A.~Sisto.
\newblock Asymptotic dimension and small-cancellation for hierarchically hyperbolic spaces and groups
\newblock \emph{Proc.~London Math.~Soc.}, 114 (2017), no.~3, 890--926.

\bibitem[BeD]{bd}
G.~Bell and A.~Dranishnikov.
\newblock Asymptotic dimension. 
\newblock \emph{Topology Appl.}, 155 (2008), no.~12, 1265--1296.

\bibitem[BeF]{bf}
G.~Bell and K.~Fujiwara.
\newblock The asymptotic dimension of a curve graph is finite. 
\newblock \emph{J. Lond. Math. Soc.}, (2) 77 (2008), no.~1, 33--50.

\bibitem[BB]{bb}
M.~Bestvina and K.~Bromberg.
\newblock On the asymptotic dimension of the curve complex. 
\newblock \emph{Geometry and topology}, 23 (2019), no.~5, 2227--2276.

\bibitem[BBF]{bbf}
M.~Bestvina, K.~Bromberg, and K.~Fujiwara.
\newblock Constructing group actions on quasi-trees and applications to mapping class group.
\newblock \emph{Publ. IHES.}, 122 (2015), 1--64.

\bibitem[BiV]{BiringerVlamis17}
I.~Biringer and N.~G.~Vlamis.
\newblock Automorphisms of the compression body graph. 
\newblock \emph{J. Lond. Math. Soc.}, (2) 95 (2017), no.~1, 94--114. 

\bibitem[Bo]{Bonahon83}
F.~Bonahon.
\newblock Cobordism of automorphisms of surfaces.
\newblock \emph{Ann. Sci. \'Ecole Norm. Sup.}, (4) 16 (1983), no.~2, 237--270. 

\bibitem[FaMa]{FarbMargalit12}
B.~Farb and D.~Margalit.
\newblock \emph{A primer on mapping class groups.}
\newblock Princeton Mathematical Series, 49, Princeton Univ.~Press, N.J., 2012. 

\bibitem[Gr]{g}
M.~Gromov.
\newblock Asymptotic invariants of infinite groups
\newblock \emph{Geometric Group Theory}, Vol. 2 (Sussex, 1991), London
Math. Soc. Lecture Note Ser., vol.~182, pp. 1--295, Cambridge University Press, Cambridge, 1993.

\bibitem[Ham]{hii}
U.~Hamenst\"adt.
\newblock Asymptotic dimension and the disk graph II. 
\newblock \emph{J. Topol.}, 12 (2019), no.~3, 675--684.

\bibitem[Har]{ha}
W.~J.~Harvey.
\newblock Boundary structure of the modular group. 
\newblock \emph{Riemann surfaces and related topics: Proceedings of the 1978 Stony Brook Conference} (State Univ. New York, Stony Brook, N.Y., 1978), pp. 245--251,
\newblock Ann. of Math. Studies, 97, Princeton Univ.~Press, Princeton, N.J., 1981.

\bibitem[He]{Hempel76}
J.~Hempel.
\newblock \emph{3-Manifolds}.
\newblock Ann.~of Math.~Studies, 86. Princeton Univ.~Press, Princeton, N.J., 1976.

\bibitem[Ma]{Ma}
J.~Mangahas.
\newblock Uniform uniform exponential growth of subgroups of the mapping class group. 
\newblock \emph{Geom. Funct. Anal.}, 19 (2010), no.~5, 1468--1480. 

\bibitem[MM99]{mm}
H.~A.~Masur and Y.~N.~Minsky.
\newblock Geometry of the complex of curves. {I}. {H}yperbolicity.
\newblock \emph{Invent. Math.}, 138(1):103--149, 1999.

\bibitem[MM00]{mm2}
H.~A.~Masur and Y.~N.~Minsky.
\newblock Geometry of the complex of curves. {II}. {H}ierarchical structure.
\newblock \emph{Geom. Funct. Anal.}, 10(4):902--974, 2000.

\bibitem[MS]{ms}
H.~Masur and S.~Schleimer.
\newblock The geometry of the disk complex. 
\newblock \emph{J.~Amer.~Math.~Soc.}, 26 (2013), no.~1, 1--62.

\bibitem[Oe]{Oertel02}
U.~Oertel.
\newblock Automorphisms of three-dimensional handlebodies. 
\newblock \emph{Topology}, 41 (2002), no.~2, 363--410. 

\bibitem[Si]{Sisto22}
A.~Sisto.
\newblock Personal communication.

\bibitem[Vo]{Vokes22}
K.~Vokes.
\newblock Hierarchical hyperbolicity of graphs of multicurves.
\newblock \emph{Algebr. Geom.~Topol.}, 22 (2022), 113--151.

\end{thebibliography}
\end{document}